\def\R{\mathbb R} \def\N{\mathbb N} \def\Z{\mathbb Z}  
  \def\H{\mathbb H}  
\def\x{\underline{x}}
\def \Scl{\mathsf{scl}}
\def \scl{\mathit{scl}}
\def \loc{\mathrm{loc}}
\def\dim{\mathsf{dim}}
\def\qand{\quad \text{ and } \quad }
\theoremstyle{plain} 
\newtheorem{lemma}{Lemma}[section]       
\newtheorem{prop}{Proposition}[section]  
\newtheorem{quest}{Question}[section]    
\newtheorem{theo}{Theorem}[section]      
\newtheorem{coro}{Corollary}[section]    
\theoremstyle{definition} 
\newtheorem{definition}{Definition}[section] 
\theoremstyle{remark} 
\newtheorem{remark}{Remark}[section]         
\crefname{thm}{Theorem}{Theorems}
\crefname{lemma}{Lemma}{Lemmas}
\crefname{prop}{Proposition}{Propositions}
\crefname{conj}{Conjecture}{Conjectures}
\crefname{fact}{Fact}{Facts}
\crefname{quest}{Question}{Questions}
\crefname{definition}{Definition}{Definitions}
\crefname{remark}{Remark}{Remarks}
\crefname{example}{Example}{Examples}
\crefname{theo}{Theorem}{Theorems}       
\crefname{coro}{Corollary}{Corollaries}  
\renewcommand*{\backref}[1]{}
\renewcommand*{\backrefalt}[4]{\quad \tiny
  \ifcase #1 (NOT CITED.)%
  \or (Cited on page~#2.)%
  \else (Cited on pages~#2.)%
  \fi
}
\begin{document}
\selectlanguage{english} 
\title{Sets with arbitrary Hausdorff and packing scales in infinite dimensional Banach spaces}
\author{Mathieu Helfter \thanks{Institute of Science and Technology Austria (ISTA), Klosterneuburg, Lower Austria.}}
\date{\today}
\maketitle
\begin{abstract}
For every couple of Hausdorff functions $ \psi$ and $\varphi $ verifying some mild assumptions, there exists a compact subset $ K $  of the Baire space such that the $ \varphi$-Hausdorff measure and the $ \psi$-packing measure on $ K$ are both finite and positive. Such examples are then embedded in any infinite dimensional Banach space to answer positively a question of Fan on the existence of metric spaces with arbitrary scales. 
\end{abstract}
\tableofcontents
\section{Introduction}  
\indent \indent A main purpose of dimension theory is to study the geometric properties of metric spaces using tools from geometric measure theory, such as outer measures and non-integer-valued dimensions. The existence of metric spaces with arbitrary Hausdorff and packing dimensions is a well-established topic. Notably, it follows from a result of Spear \cite{spear1998sets} that there exist subsets of Euclidean spaces with arbitrary values for  Hausdorff and packing dimension as long as the Hausdorff dimension is at most the packing dimension. This article proposes to explore the infinite-dimensional counterparts  of this result. In \cite{helfter2025scales}, the notion of \emph{scale} was introduced to unify and generalize several dimension-like quantities, with the aim of refining the geometric study of infinite-dimensional and $0$-dimensional spaces. The main goal of this article is to answer a question posed by Fan regarding the existence of spaces with arbitrarily prescribed scales. We achieve this by proving in \cref{main1} the existence of Cantor sets of arbitrarily large size whose natural probability measures display a wide variety of scaling behaviors. More precisely, given two functions $\varphi$ and $\psi$ satisfying some mild assumptions, see \cref{cond preceq}, there exists a product of finite sets equipped with an ultrametric distance such that its corresponding $\varphi$-Hausdorff and $\psi$-packing measures are both simultaneously positive and finite. These measures turn out to be proportional to the \emph{equilibrium state}, i.e. the product of equidistributed measures on these finite sets. In \cref{main scl}, we embed these examples into arbitrary infinite-dimensional Banach spaces, thereby demonstrating the existence of compact sets with arbitrary Hausdorff and packing \emph{scales} in any Banach space.

The scaling functions are considered among the class of continuous \emph{Hausdorff functions}: 
\begin{definition}[Hausdorff functions]
The set $  \mathbb{H} $  of  \emph{Hausdorff functions} is the set of continuous non-decreasing functions $ \phi : \R_+ \to \R_+$ such that $ \phi ( 0 )  =  0$ and $ \phi > 0 $ on $ \R_+^*$. 
\end{definition}
 
Hausdorff functions, first introduced by \cite{rogers1998hausdorff}, allow for a generalization of Hausdorff and packing measures by replacing the usual power-law scaling functions $\varepsilon \mapsto \varepsilon^\alpha $ with more general gauge functions.  
Precisely, given $  \phi \in \H$, let us first recall the definition of $\phi$-Hausdorff measure and $ \phi$-packing measure. Let $ (X,d)$ be a (separable) metric space. For the Hausdorff measure, consider an error $ \varepsilon >0$. We recall that an $ \varepsilon$-cover is a countable collection of open balls  $ ( B_i)_{ i \in I } $ of radii at most $ \varepsilon$  so that $ X \subset \bigcup_{ i \in I } B_i$. We then consider the quantity: 
\[ \mathcal{H}_\varepsilon^\phi(X)   := \inf \left\{ \sum_{i \in I } \phi( \vert B_i \vert)   : ( B_i)_{ i \in I } \text{ is an $ \varepsilon$-cover of $X$ }   \right\} \; ,  \] 
where  $ \vert B \vert$ is the radius of a ball $B \subset X$.  
The following non-decreasing limit does exist:
\[ \mathcal{H}  ^\phi( X) := \lim_{ \varepsilon \rightarrow 0} \mathcal{H}_\varepsilon^\phi ( X)  \; . \]
When replacing $(X,d)$ in the previous definitions by any subset of $X$ endowed with the same metric $d$, it is well-known that $ \mathcal{H}^\phi$ defines an outer measure on $X$. This outer measure is usually called the \emph{$ \phi$-Hausdorff measure} on $X$. This construction and well-known properties of Hausdorff measures can be found in \cite{falconer1997techniques,falconer2004fractal,rogers1998hausdorff}. 
Then the packing measure was presented by Tricot \cite{tricot1982two}.  Consider, similarly, an error $ \varepsilon > 0$, and recall that an \emph{$\varepsilon$-pack} of  $ (X,d)$ is a countable collection of disjoint open balls of $X$ with radii at most $ \varepsilon$. Then, set: 
\[ \mathcal{P}_\varepsilon ^{\phi}(X) := \sup \left\{ \sum_{i \in I } \phi (  \vert B_i \vert  ) : (B_i )_ { i \in I  }\ \text{ is an $ \varepsilon$-pack of $X$}\right\} \;  . \]
Since $ \mathcal{P}_\varepsilon^\phi(X)$ is non-increasing as $\varepsilon$ decreases to $0$,  the following quantity is well defined: 
\[ \mathcal{P}_0^\phi (X) := \lim_{ \varepsilon  \rightarrow 0} \mathcal{P}_\varepsilon^\phi(X).\]
The above only defines a pre-measure. We recall that the \emph{$\phi$-packing measure} is given by: 
\[ \mathcal{P}^\phi (X) = \inf  \left\{ \sum_{n \ge  1 } \mathcal{P}_0^{\phi} ( E_n ) : X =  \bigcup_{n \ge 1 } E_n  \right\} \; .  \]
This similarly induces an outer measure on $(X,d)$. 
Note that the initial construction of Tricot in \cite{tricot1982two} considered diameters of balls instead of their radius. See also \cite{evans2018measure,taylor1985packing}. 
 Cutler \cite{cutler1995density} and Haase \cite{haase1986non} indicated that the radius-based definition is doing a better job at preserving desired properties of packing measure and dimension from the Euclidean case. This choice was then followed, for instance, in \cite{mattila1997measure,mcclure1994fractal}. 
\newline
 
The main motivation of this article is to answer \cref{question Fan} of Fan. This question lies in the framework of \emph{scales} that was introduced in \cite{helfter2025scales} to generalize part of dimension theory to infinite (and $0$) dimensional spaces by defining finite invariants that take into account at which "scale" the space must be studied. The involved notions and the answer to that question are  given in \cref{lieu scl}. 
When focusing on packing and Hausdorff measure, Fan's question can be reformulated as: 
\begin{quest} \label{question+}
Given two Hausdorff functions $ \varphi,  \psi \in \H $, under what conditions on $  \varphi $ and $ \psi  $ does there exist a compact metric space $(X,d)$ so that $ \mathcal{H}^\varphi ( X) $ and $ \mathcal{P}^\psi ( X) $  are both finite, non-trivial constants? 
\end{quest}
In that direction, let us mention that De Reyna has shown in \cite{de1988hausdorff} that for any Banach space $ A $  of infinite dimension and any $ \varphi \in \H$  there exists a measurable set $ K \subset A $ such that $ 0 <  \mathcal{H}^\varphi( K) < + \infty$. 

We answer to \cref{question+} in \cref{main1} stated in the coming \cref{lieu statements}. In that same section, we also provide the setting and proper formulation of the question of Fan and provide its answer in \cref{main scl} by embedding examples from \cref{main1} in an arbitrary infinite-dimensional Banach space.
In \cref{lieu proof Banach} we prove \cref{main scl} while \cref{lieu 1} consists of the proof of \cref{main1}. 
 \newline

A last notion that will be involved in the statements of the results is the one of \emph{densities of measure}.  They will allow us to compute Hausdorff and packing measures in our examples.  
\begin{definition} \label{loc rate}
Let  $ \mu $ be a  Borel measure on $(X,d) $. 
Let $ \phi \in \H $ be a Hausdorff function; the \emph{lower and upper $ \phi$-densities} of $ \mu$ are given at $ x \in X $ by: 
\begin{equation} \label{def loc}
\underline{D}^\phi_\mu (x)  = \liminf_{\varepsilon \to 0 }  \frac{\mu ( B (x, \varepsilon) )  }{ \phi ( \varepsilon) }  \qand  \overline{D}^\phi_\mu (x)  = \limsup_{\varepsilon \to 0 }  \frac{\mu ( B (x, \varepsilon) )  }{ \phi ( \varepsilon) } \; .
\end{equation}
\end{definition}  
\vspace{.3cm}
\textit{I am deeply thankful to Ai Hua Fan for posing this insightful question and for his keen interest, and to the anonymous referees for their thoughtful comments.}
\section{Statements \label{lieu statements}}
\subsection{Cantor sets with prescribed Hausdorff and packing measures}
Examples of Cantor constructed here are compact subsets of the space $ E = \N^{ * \N^*} $  of positive integer valued sequences. We endow $ E$  with the ultrametric distance: 
\begin{align*}
\delta \colon E \times E  & \longrightarrow \R^+ \\
(\underline{x}, \underline{x}')  &\longmapsto   2^{  -  \chi ( \underline{x} , \underline{x}') } \; 
\end{align*}
where  $ \chi ( \underline{x} , \underline{x}')  = \inf \lbrace n \ge 1 : x_n \neq x_n' \rbrace$ is the minimal index such that the sequences $ \underline{x}  = ( x_n)_{ n \ge 1} $ and $ \underline{x}' = (x'_n)_{ n \ge 1 } $ differ. 
Note that $ \delta$ provides the product topology on $E$ which is separable. 
We consider compact subsets of $ E$  of the following form: 
\begin{definition}[Compact product and equilibrium state]
A compact subset $ K  \subset E$ is called \emph{compact product} if it is of the form: 
\begin{equation} \label{def X}
 K  = \prod_{ k \ge 1 }   \lbrace 1 , \dots , n_k \rbrace  \quad   \text{ where }  n_k \in \N^*, \  \forall k \ge 1  \; . 
\end{equation} 
It is naturally endowed with the measure $ \mu := \otimes_{ k \ge 1}  \mu _k $ where $ \mu_k $ is the equidistributed probability measure on $ \lbrace 1 , \dots , n_k \rbrace $.  We call this measure the \emph{equilibrium state} $\mu$ of $ K$. 
\end{definition} 
  Note that $ K$ naturally enjoys a group structure as it can be identified to  $ \prod_{ k \ge 1 } \Z \slash n_k \Z$ with the induced product law given by $ \underline{x} + \underline{x}' = ( x_n + x'_n)_{n \ge 1}  $. 
Then, under this consideration, note that $ K $ is a compact topological group and $ \mu$ is the Haar measure on $ K$.   In particular, the $\mu$-mass of a ball centered in $  K $  only depends on its radius. Consequently, for every $ \phi \in \H$, the lower and upper $ \phi$-densities of $ \mu$ are constants on $ K$. We will then identify $\overline{D}_\mu^\phi   $ and $ \underline{D}_\mu^\phi $  with their corresponding constants.
We are now ready to state the first result of this paper:  
\begin{theo} \label{main1}
Let $ \varphi , \psi \in \H $. Assume that there exists a constant $ C > 0 $  such that for every $ \varepsilon > 0 $: 
\begin{equation} \label{cond preceq}
\psi (   2 \varepsilon) \le C \cdot   \varphi (\varepsilon )    \;  . 
\end{equation}
 Then there exists a compact product $ K \subset E $ with equilibrium state $ \mu$  such that: 
\begin{equation*} \label{main2 eq}
\overline{D}_\mu^\varphi \cdot  \mathcal{H}^\varphi ( X)  = \mu (X) = \underline{D}_\mu^\psi  \cdot \mathcal{P}^\psi   ( X) \; . 
\end{equation*}
for every Borel subset $ X \subset K$.  \newline
In particular: 
\begin{equation*}
 \overline{D}_\mu^\varphi \cdot \mathcal{H}^\varphi ( K )   = 1 =  \underline{D}_\mu^\psi  \cdot  \mathcal{P}^{\psi}  ( K ) \; .
\end{equation*}  
\end{theo}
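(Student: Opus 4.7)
My plan is to first identify the densities explicitly, then build $(n_k)$ so that both densities are positive and finite, and finally invoke density principles specialized to the ultrametric setting to convert these facts into the desired proportionalities. The equilibrium state $\mu$ on any compact product $K$ has piecewise constant ball-mass: for $\underline{x} \in K$ and $\varepsilon \in (2^{-N-1}, 2^{-N}]$, ultrametricity identifies $B(\underline{x},\varepsilon)$ with the $N$-cylinder, of mass $m_N := \prod_{k=1}^N n_k^{-1}$. Monotonicity of $\varphi,\psi$ places the extrema of $m_N/\varphi(\varepsilon)$ and $m_N/\psi(\varepsilon)$ at the endpoints of each dyadic interval and yields the clean identifications
\[
\overline{D}_\mu^\varphi \;=\; \limsup_{N \to \infty}\, \frac{m_N}{\varphi(2^{-N-1})},\qquad
\underline{D}_\mu^\psi \;=\; \liminf_{N \to \infty}\, \frac{m_N}{\psi(2^{-N})}.
\]
The proof then reduces to two tasks: (i) construct $(n_k)$ so that both $\overline{D}_\mu^\varphi$ and $\underline{D}_\mu^\psi$ lie in $(0,\infty)$; (ii) derive the identities $\overline{D}_\mu^\varphi \cdot \mathcal{H}^\varphi = \mu$ and $\underline{D}_\mu^\psi \cdot \mathcal{P}^\psi = \mu$ on Borel subsets of $K$ from these facts.

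For task (i), I would fix a window $[\psi(2^{-N}),\, 2C\varphi(2^{-N-1})]$ at step $N$; applying \cref{cond preceq} at $\varepsilon = 2^{-N-1}$ gives $\psi(2^{-N}) \le C\varphi(2^{-N-1})$, so this window has multiplicative width at least $2$. Inductively, given $m_{N-1}$ inside the previous window, the set of admissible $n_N \in \N^*$ such that $m_{N-1}/n_N$ falls in the new window is an interval of ratio $\ge 2$ whose right endpoint is $\ge 1$, hence contains a positive integer; the induction is always feasible. Keeping $m_N$ in the window already gives $\overline{D}_\mu^\varphi < \infty$ and $\underline{D}_\mu^\psi > 0$. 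To additionally secure $\overline{D}_\mu^\varphi > 0$ I would choose $m_N$ close to the upper endpoint $\varphi(2^{-N-1})$ along an infinite subsequence, and to secure $\underline{D}_\mu^\psi < \infty$ I would choose $m_N$ close to the lower endpoint $\psi(2^{-N})$ along another, spacing the two subsequences widely enough that the required ratios between consecutive values of $m$ remain admissible integer divisors. This controlled oscillation under the integer-divisor constraint is the main combinatorial subtlety.

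For task (ii), the ultrametric structure turns the standard density principles into exact equalities. The bound $\mathcal{H}^\varphi(X) \ge \mu(X)/\overline{D}_\mu^\varphi$ is the mass distribution principle: since $\mu(B(x,r)) \le (\overline{D}_\mu^\varphi + \eta)\varphi(r)$ for small $r$, any fine cover contributes $\sum \varphi(r_i) \ge \mu(X)/(\overline{D}_\mu^\varphi + \eta)$. The reverse bound, which in Euclidean space would lose a Besicovitch multiplicity constant, is exact here because two balls in $(E,\delta)$ are always either nested or disjoint, so from a fine covering family of $X$ by balls nearly realizing the upper density one extracts an exact disjoint subcover. Dually, disjointness of packings gives $\mathcal{P}_0^\psi(X) \le \mu(X)/\underline{D}_\mu^\psi$ at the pre-measure level, while the matching lower bound on $\mathcal{P}_0^\psi$ comes from a Vitali-type disjoint subpacking built from balls realizing the lower density; passage to $\mathcal{P}^\psi$ then uses countable additivity of $\mu$ along any decomposition $X = \bigcup E_n$. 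I expect the main technical hurdles to be the combinatorial construction in task (i) and the careful bookkeeping of the $\mathcal{P}_0^\psi \to \mathcal{P}^\psi$ passage together with the open/closed ball distinctions inherent to the ultrametric.
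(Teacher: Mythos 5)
Your proposal follows essentially the same route as the paper: identify the densities of the equilibrium state along dyadic radii (your two displayed formulas are exactly \cref{fact contdiscr}), build the cylinder masses by an integer-constrained oscillation whose feasibility rests on your correct observation that \cref{cond preceq} makes the window $[\psi(2^{-N}),\,2C\varphi(2^{-N-1})]$ of multiplicative width at least $2$ with right endpoint of the admissible $n_N$-range at least $1$, and then convert the densities into exact Hausdorff and packing measures using homogeneity and the nested-or-disjoint structure of ultrametric balls. Two steps remain under-specified. First, the oscillation you flag as ``the main combinatorial subtlety'' is precisely the content of the paper's Lemmas~\ref{osc lemma} and~\ref{r to prod}: one alternates between hitting the lower endpoint (a single division suffices) and the upper endpoint (whenever the shrinking upper endpoint forces a division, the minimal admissible integer lands within a factor $2$ of it); your sketch is consistent with this resolution but does not carry it out. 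Second, as literally written your ``reverse'' density arguments for a general Borel $X$ produce the measure of a small neighbourhood of $X$ rather than $\mu(X)$: covering $X$ by the cylinders of a good radius that meet $X$ only yields $\mathcal H^\varphi(X)\le\mu(\overline X)/\overline D^\varphi_\mu$, and dually the packing pre-measure bound $\mathcal P_0^\psi(X)\le\mu(\overline X)/\underline D^\psi_\mu$, since packed balls centered in $X$ need not lie in $X$. This is repaired as in the paper's \cref{lem geom}: establish the equalities on cylinders, where homogeneity makes them exact, and extend to Borel subsets by Carath\'eodory (agreement of finite Borel measures on the generating $\pi$-system of clopen balls). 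Neither point is a wrong turn; both are routine completions of your outline.
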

Let us make a few comments about this result. 
\begin{remark}
If $ \psi ( \varepsilon) = \varphi( \varepsilon/2)$ with $ \varphi  \in \H $, then \cref{cond preceq} is verified. 
\end{remark}
\begin{remark}
In the dimensional case: given $ \alpha > 0$, the map $ \varphi( \varepsilon) = \psi ( \varepsilon) =  \varepsilon^\alpha  $   verifies \cref{cond preceq} with  $C = 2^{\alpha}$. 
\end{remark}
\begin{remark}  
The condition in \cref{cond preceq} can be weakened by changing the metric $ \delta$.  For instance, one could check along the proofs that we can use the metric $ \delta ( \underline{x} , \underline{x}') := \rho^{- \chi ( ( \underline{x} , \underline{x}') ) } $ for some constant $ \rho >1 $ and then \cref{cond preceq} becomes $ \psi ( \rho \cdot \varepsilon ) \le C \varphi ( \varepsilon)$ while the conclusion of  \cref{main1} remains the same. 
  
\end{remark}
\begin{remark}\label{rem box}
Also in the setting of \cref{main1}, the upper and lower densities can be defined without considering explicitly the equilibrium state $ \mu$. Indeed, first observe that for every integer $ k \ge 1 $ for every $ \varepsilon \in ( 2^{-(k+1)} , 2^{-k} ]$  and every $ \x \in K $  the mass of the open ball $ B( \underline{x}, \varepsilon ) $ for the equilibrium state $ \mu$ is characterized by: 
\begin{equation} \label{eq mu cover}
 \mathcal{N}_{\varepsilon} (  K ) = \prod_{j=1}^k n_j = \frac{1}{\mu ( B( \underline{x}, \varepsilon ) )   } \; , 
\end{equation}
where $\mathcal{N}_{\varepsilon} (  K )  $ is the $ \varepsilon$-cover number of $ K$, that is the minimal cardinality of a cover of $ K $  by open balls of radius $\varepsilon$. 
This directly implies: 
\begin{equation} \label{dens box}
 \overline{D}_\mu^\varphi \cdot \liminf_{ \varepsilon \to 0 } \mathcal{N}_\varepsilon (K) \cdot \varphi ( \varepsilon) = 1 = \underline{D}_\mu^\psi \cdot   \limsup_{ \varepsilon \to 0 } \mathcal{N}_\varepsilon (K)  \cdot \psi ( \varepsilon) 
\end{equation}
\end{remark} 
The proof of \cref{main1} relies on two ingredients. The first ingredient is \cref{lem geom} which ensures that it is enough to compute the upper and lower densities of $ \mu$ to conclude. The second ingredient, \cref{prop seq main}, constructs explicitly the subset $ K $ by producing the sequence $ v_k := \prod_{j = 1}^k n_j $.  \medskip
\subsection{Compact subspaces with arbitrary scales \label{lieu scl} }
Consider $ (X,d) $ a metric space and $ \mu$ a Borel measure on $ X$. 
Before stating \cref{question Fan} and its answer \cref{main scl}, we shall recall a few definitions from \cite{helfter2025scales}. 
\begin{definition}[Scaling] \label{def scaling}
A one-parameter family of Hausdorff functions $\Scl :=  ( \scl_\alpha )_{ \alpha > 0 } $  is called a (continuous) \emph{scaling} if for every $ \beta > \alpha > 0  $ there exists $ \lambda > 1 $ so that: 
\begin{equation} \label{eq scl}
\scl_\beta  ( \varepsilon ) = o \left( \scl_\alpha ( \varepsilon^\lambda)  \right) \qand \scl_\beta ( \varepsilon ) = o  \left( ( \scl_\alpha  ( \varepsilon)  )^\lambda  \right) \; 
\end{equation}
as $ \varepsilon $ goes to $ 0 $. 
\end{definition}
\begin{definition}[Hausdorff and packing scales] 
Given a scaling $ \Scl = ( \scl_\alpha)_{ \alpha>0}$, the \emph{Hausdorff and packing scales} of $ (X,d)$ are defined by: \[  \Scl_H X := \sup \left\{ \alpha > 0  : \mathcal{H}^{\scl_\alpha}(X) = + \infty  \right\} = \inf \left\{ \alpha > 0  : \mathcal{H}^{\scl_\alpha}(X) = 0   \right\} \; \]
and
\[  \Scl_P X := \sup \left\{ \alpha > 0  : \mathcal{P}^{\scl_\alpha} (X) = +  \infty \right\} = \inf  \left\{ \alpha > 0 : \mathcal{P}^{\scl_\alpha} (X) = 0 \right\}   \; . \]
\end{definition}
It is shown in \cite{helfter2025scales} that the above quantities are always well defined in $ [ 0 , + \infty ] $  and that they verify:
\begin{equation} \label{ineg hp scl}
 \Scl_H X \le \Scl_P X \; , 
\end{equation}
  
extending well-known results from the dimensional case. 
It is easy to check that when $ \Scl = \dim := ( \varepsilon \mapsto \varepsilon^\alpha) $, the condition in \cref{eq scl}  is indeed verified and that we retrieve the classical notions of Hausdorff and packing dimensions. 
Similarly, the followings generalize the classical notions of local (or pointwise) dimensions of a measure:   
\begin{definition}[Local scales of measures]
Let $ x \in X$, the  \emph{lower and upper local scales} of $ \mu $ at $x$ are defined by: 
 \[ \underline{\Scl}_{\loc} \mu (x) := \sup \left\{ \alpha > 0  :  \overline{D}_\mu^{\scl_\alpha} ( x)  =  0  \right\}   \qand  \overline{\Scl}_{\loc} \mu (x) := \inf \left\{ \alpha > 0  :  \underline{D}_\mu^{\scl_\alpha} ( x)  =  + \infty \right\}   \; . \] 
\end{definition}
The above defined scales are always comparable. Theorem $B$ in \cite{helfter2025scales} provides that: 
\begin{equation} \label{ineg loc scl}
\underline{\Scl}_{\loc} \mu (x) \le \Scl_H X \qand \overline{\Scl}_{\loc} \mu (x) \le \Scl_P X \; . 
\end{equation}
for $ \mu$-almost every $x \in X $.  
In the dimensional case, \cref{ineg loc scl} is well-known, see e.g. \cite{falconer1997techniques,fan1994dimensions,tamashiro1995dimensions}.    
 We shall also mention that generalizations of dimension theory, with slightly different viewpoints for general metric spaces were proposed, for instance by McClure \cite{mcclure1994fractal} or Kloeckner \cite{kloeckner2012generalization}.   \newline

One of the features of scales is that they are bi-Lipschitz invariants, i.e. they remain unchanged under bi-Lipschitz transformations of the space. Also, the definition of scaling is tuned to encompass the following examples for every couple of integers  $ p, q \ge 1$:
\begin{equation} \label{ex  growth}
\phi_\alpha : \varepsilon > 0 \mapsto \frac{1}{\exp ^{\circ p  } ( \alpha  \cdot  \log_+^{ \circ q } ( \varepsilon^{-1 } ) ) } \; , 
\end{equation} 

for $ \alpha > 0$ and   where $ \log_+ :=   \1_{ (1, + \infty)  } \cdot \log $ and $ \1_A $ is the indicator function of a set $A$. Recall also that for a self-map $f$ and an integer $ n \ge 1$, we denote \( f^{\circ n} \)  the \( n \)-th iterate of the map \( f \). More explicitly,
\(
f^{\circ 0} = \mathrm{id}, \  f^{\circ (n+1)} = f \circ f^{\circ n } \; .
\)
Note that with $ p = 1 $ and $ q=1$ we retrieve the family defining dimensions.  
For $ p = 2$ and $q=1$ it induces the order that is used to describe several natural infinite-dimensional spaces such as spaces of differentiable maps, see  \cite{tikhomirov1993varepsilon,mcclure1994fractal,helfter2025scales}; ergodic decomposition of measurable maps on smooth manifolds, see \cite{berger2022analytic, berger2017emergence,berger2021emergence,berger2020complexities,helfter2025scales}; or even geometry of gaussian processes such as standard Brownian motion, see \cite{dereich2003link,helfter2025scales} .   
Also, the family given by $ p = 2 $ and $  q = 2$ could be used to describe some compact spaces of holomorphic maps; see \cite{tikhomirov1993varepsilon,helfter2025scales}. \medskip 

The main motivation of this paper is to answer the following question of Aihua Fan: 
\begin{quest}[Fan] \label{question Fan}
Do there exist spaces with arbitrary scales? 
\end{quest}
When restricting to the aforementioned Hausdorff, packing and local scales; the answer to this question is a direct application of \cref{main1}. We actually propose a  stronger result by showing that examples provided by \cref{main1} can be embedded in an arbitrary infinite-dimensional Banach space: 
\begin{theo} \label{main scl}
Let $ ( A , \| \cdot \|  ) $ be an infinite-dimensional Banach space. 
Then, for every scaling $ \Scl = (\scl_\alpha)_{\alpha > 0 } $ and for every $ \beta \ge \alpha > 0$, there exists a compact subset $  X \subset A   $ such that: 
\[ \Scl_H X = \alpha   \qand \Scl_P X  = \beta  \; . \] 
Moreover, there exists a probability measure $ \nu$ on $X$ such that for $ \nu$-almost every $x \in X$:
\[  \underline{\Scl}_{\loc}  \nu ( x) =  \alpha \qand   \overline{\Scl}_{\loc}  \nu (x)  = \beta \; . \]
\end{theo}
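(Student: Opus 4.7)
The plan is to apply \cref{main1} inside the ambient space $E$ to obtain a compact product $K$ with prescribed Hausdorff and packing measures, and then to bi-Lipschitz embed $K$ into the Banach space $A$, relying on the bi-Lipschitz invariance of all scales involved (established in \cite{helfter2025scales}).

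The first step is to apply \cref{main1} with the Hausdorff functions $\varphi := \scl_\alpha$ and $\psi(\varepsilon) := \scl_\beta(\varepsilon/2)$, both in $\H$. The condition \cref{cond preceq} then reads $\scl_\beta(\varepsilon) \le C \scl_\alpha(\varepsilon)$. When $\beta = \alpha$ it holds with $C = 1$. When $\beta > \alpha$, the scaling property \cref{eq scl} supplies some $\lambda > 1$ with $\scl_\beta(\varepsilon) = o(\scl_\alpha(\varepsilon^\lambda))$; since $\varepsilon^\lambda \le \varepsilon$ for $\varepsilon \in (0,1]$ and $\scl_\alpha$ is non-decreasing, this gives $\scl_\beta(\varepsilon)/\scl_\alpha(\varepsilon) \to 0$ as $\varepsilon \to 0$. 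The condition for large $\varepsilon$ is arranged by harmlessly truncating $\varphi, \psi$ above the diameter of $E$ without altering the measures. \cref{main1} then produces a compact product $K \subset E$ with equilibrium state $\mu$ for which $\overline{D}_\mu^{\scl_\alpha}$ and $\underline{D}_\mu^{\psi}$ are positive, finite constants on $K$.

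The second step upgrades these density statements to the prescribed scales. For $\alpha' < \alpha$, the scaling relation $\scl_\alpha = o(\scl_{\alpha'}^\lambda)$ with $\lambda > 1$ forces $\scl_\alpha / \scl_{\alpha'} \to 0$ as $\varepsilon \to 0$, and a direct cover-by-cover comparison in the definition of Hausdorff measure yields $\mathcal{H}^{\scl_{\alpha'}}(K) = +\infty$; the symmetric comparison for $\alpha'' > \alpha$ gives $\mathcal{H}^{\scl_{\alpha''}}(K) = 0$, so $\Scl_H K = \alpha$. A parallel $\varepsilon$-pack computation yields $\Scl_P K = \beta$. Applied to the densities of $\mu$, the same dichotomies give $\overline{D}_\mu^{\scl_{\alpha'}} \equiv 0$ for $\alpha' < \alpha$ and $\underline{D}_\mu^{\scl_{\beta'}} \equiv +\infty$ for $\beta' > \beta$, whence $\underline{\Scl}_{\loc} \mu \equiv \alpha$ and $\overline{\Scl}_{\loc} \mu \equiv \beta$ pointwise on $K$.

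The last and, I expect, main step is the construction of a bi-Lipschitz embedding $\Phi \colon (K, \delta) \hookrightarrow A$. By Mazur's theorem, $A$ contains a normalized basic sequence $(f_i)_{i \ge 1}$ whose biorthogonal functionals $(f_i^*)_{i \ge 1}$ have uniformly bounded norm $M := \sup_i \|f_i^*\| < +\infty$. Writing $K = \prod_{k \ge 1} \{1, \ldots, n_k\}$ and $N_k := \sum_{j < k} n_j$, I would set
\begin{equation*}
\Phi(\x) := \sum_{k \ge 1} 2^{-k} f_{N_k + x_k} \qquad \text{for every } \x \in K,
\end{equation*}
which converges absolutely. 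For distinct $\x, \x' \in K$ with $k_0 := \chi(\x, \x')$, the triangle inequality yields the upper bound $\|\Phi(\x) - \Phi(\x')\| \le 4 \cdot 2^{-k_0}$; evaluating the difference on $f_{N_{k_0} + x_{k_0}}^*$ and noting that for $k > k_0$ the indices $N_k + x_k, N_k + x'_k$ strictly exceed $N_{k_0} + x_{k_0}$ gives the lower bound $\|\Phi(\x) - \Phi(\x')\| \ge M^{-1} \cdot 2^{-k_0}$. Hence $\Phi$ is bi-Lipschitz from $(K, \delta)$ onto $X := \Phi(K) \subset A$, and by the bi-Lipschitz invariance of Hausdorff, packing and local scales, the set $X$ and the push-forward $\nu := \Phi_* \mu$ inherit the prescribed profile from the previous step.
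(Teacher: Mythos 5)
Your proposal is correct, and while the first two steps (applying \cref{main1} to $\varphi=\scl_\alpha$, $\psi=\scl_\beta(\cdot/2)$ and then converting the finite positive densities/measures into the exact values of the scales via \cref{def scaling}) coincide with the paper's argument, your embedding step takes a genuinely different route. The paper constructs a \emph{quasi}-Lipschitz embedding of the whole ambient space $(E,\delta)$ into $A$ (\cref{prop embed}), built from an infinite equilateral set obtained after renorming via Mercourakis--Vassiliadis \cite{mercourakis2014equilateral}; the damping factors $1/(k2^k)$ needed to beat the tail make the distortion logarithmic, so the paper must first prove that Hausdorff, packing and local scales are invariant under quasi-Lipschitz embeddings (\cref{scl quasiL,scl biquasi}) --- a lemma of independent interest that exploits the $\scl_\beta(\varepsilon)=o(\scl_\alpha(\varepsilon^\lambda))$ half of \cref{eq scl}. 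You instead embed only the compact product $K$, using Mazur's theorem to get a normalized basic sequence whose coordinate functionals are uniformly bounded on its closed span; since distinct coordinates of $K$ use disjoint blocks of basis vectors, evaluating $f^*_{N_{k_0}+x_{k_0}}$ kills the tail exactly and yields a genuinely bi-Lipschitz map $M^{-1}\delta(\x,\x')\le\|\Phi(\x)-\Phi(\x')\|\le 4\,\delta(\x,\x')$, after which only the (already established) bi-Lipschitz invariance of scales is needed. Your route is more classical and self-contained (Mazur rather than equilateral sets, no quasi-Lipschitz lemma), at the cost of an embedding that depends on the block structure of $K$ rather than a single embedding of all of $E$. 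Two small points to tidy up: the coordinate functionals are a priori defined only on the closed span of the basic sequence, which suffices here since $\Phi(\x)-\Phi(\x')$ lies in that span; and in the second step, to conclude $\underline{\Scl}_{\loc}\mu\equiv\alpha$ and $\overline{\Scl}_{\loc}\mu\equiv\beta$ you should also record that $\overline{D}^{\scl_{\gamma}}_\mu=+\infty$ for $\gamma>\alpha$ and $\underline{D}^{\scl_{\gamma}}_\mu=0$ for $\gamma<\beta$ (or simply invoke the inequalities \cref{ineg loc scl}), which follow by the same comparison of gauge functions.
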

This result's first aim is to provide the simplest tools to embed Cantor sets with various scales into infinite-dimensional Banach spaces. Actually, in view of \cref{main1} and de Reyna's result in \cite{de1988hausdorff} it is natural to ask if the following stronger result holds true: 
\begin{quest}
Can we replace $(E, \delta) $ in \cref{main1} by any infinite-dimensional Banach space  ? 
\end{quest}
   
Note that this work is restricted to the study to Hausdorff, packing, and local scales, although other types of scales such as box counting and quantization scales were introduced in~\cite{helfter2025scales}. The question of realizing arbitrary values for these latter scales can be addressed relatively easily using \cref{main scl} and \cref{rem box} together with the fact that both box counting and quantization scales are invariant under topological closure, whereas Hausdorff and packing dimensions are  $\sigma$-stable.  Beyond these, other notions of scales may be introduced such as \emph{conformal},  \emph{Fourier}  or even maybe \emph{Assouad}  scales which generalize their respective dimensional counterparts to appropriate contexts. However, the problem of constructing metric spaces with prescribed values for these more refined scales is subtler. In the finite dimensional setting, some results are already known in this direction. Spear showed in~\cite{spear1998sets} that there exist subsets of the interval $[0,1]$ with arbitrary Hausdorff, packing, and box dimensions, provided they satisfy the inequalities:
\[
0 \leq \dim_H \leq \underline{\dim}_B \leq 1 \quad \text{and} \quad 0 \leq \dim_H \leq \dim_P \leq \overline{\dim}_B \leq 1 \;.
\]
More recently, Ishiki proved in~\cite{ishiki2021fractal} that there exists a Cantor ultrametric space whose Hausdorff, packing, upper box-counting, and Assouad dimensions can be prescribed arbitrarily, subject to the constraints:
\[
\dim_H \leq \dim_P \leq \overline{\dim}_B \leq \dim_A \;.
\]
It is natural to wonder if the  significant flexibility appearing in such examples could be broadcasted to infinite-dimensional settings. 
\section{Invariance of scales and embedding compact products in  Banach spaces }\label{lieu proof Banach}
In this section we study quasi-Lipschitz invariance of scales and then deduce \cref{main scl}  from \cref{main1}. 
\subsection{Quasi-Lipschitz invariance of scales}
It has been proved in \cite{helfter2025scales} that scales are bi-Lipschitz invariants. Actually in \cref{scl biquasi}, we obtain a slightly stronger invariance, which will be a key tool of the proof of \cref{main scl}. This result relies on the following  notions: 
\begin{definition}[Quasi-Lipschitz map and embedding]
Let $(X,d_X) $ and $ (Y, d_Y) $ be two metric spaces.  
We say that  $f : X \to Y $ is a \emph{quasi-Lipschitz map} if it is locally $ \alpha$-H{\"o}lder for every $ \alpha <1$; i.e.
\[ \lim_{ \varepsilon \to 0 }  \  \inf_{ 0 <   d_X (  x,x') <  \varepsilon}  \frac{\log ( d_Y( f(x), f(x') ) ) }{ \log( d_X ( x,x') )  } \ge 1 \; . \] 
Moreover, if $ f $ is injective, it is said  to be a \emph{quasi-Lipschitz embedding} if $f^{-1} : f(X) \subset Y \to X $ is also a quasi-Lipschitz map, or equivalently for every $x \in X$:
\[\lim_{x' \to x, \ x' \neq x}  \frac{\log ( d_Y( f(x), f(x') ) ) }{ \log ( d_X ( x,x') )  } = 1 \; ;\]
moreover the convergence is uniform in $  x \in X$.  
\end{definition} 

Then we state the following result for quasi-Lipschitz maps: 
\begin{lemma} \label{scl quasiL}
Let $ ( X,d_X) $ and $ (Y, d_Y)$ be two metric spaces and $ \Scl$ be a scaling. Let  $f : X \to Y $ be a quasi-Lipschitz map, then: 
\[ \Scl_H f(X) \le \Scl_H X  \qand \Scl_P f(X) \le \Scl_P X  \;. \]
Moreover, for every measure $ \mu$ on $X$, with $ \nu := f_* \mu $, the pushforward by $\mu$ of $f$, every $x \in X $ verifies:
\[ \underline{\Scl}_{\loc} \nu ( f(x)) \le  \underline{\Scl}_{\loc}  \mu (x) \qand  \overline{\Scl}_{\loc} \nu ( f(x)) \le  \overline{\Scl}_{\loc}  \mu (x)  \; . \] 
\end{lemma}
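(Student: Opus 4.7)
The plan is to extract from the quasi-Lipschitz hypothesis a uniform local Hölder estimate: for every $\theta \in (0,1)$ there exists $\eta > 0$ such that $d_Y(f(x), f(x')) \le d_X(x, x')^\theta$ whenever $d_X(x, x') < \eta$. This follows immediately by unpacking the liminf and the global infimum in the definition. From this estimate, two basic inclusions will be used throughout: $f(B(x, r)) \subset B(f(x), r^\theta)$ and $B(x, r^{1/\theta}) \subset f^{-1}(B(f(x), r))$ for $r < \eta$, both uniform in $x \in X$. All three conclusions will then rest on a careful matching between $\theta$ and the constant $\lambda > 1$ supplied by the scaling property \cref{eq scl}.

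For the Hausdorff inequality, fix $\gamma > \Scl_H X$, choose $\beta \in (\Scl_H X, \gamma)$, let $\lambda > 1$ satisfy $\scl_\gamma(\varepsilon) = o(\scl_\beta(\varepsilon^\lambda))$ as $\varepsilon \to 0$, and pick $\theta \in (1/\lambda, 1)$. Any $\varepsilon$-cover of $X$ with $\varepsilon$ small produces an $\varepsilon^\theta$-cover of $f(X)$ via images of balls. The chain $\scl_\gamma(r_i^\theta) \le \scl_\beta(r_i^{\theta\lambda}) \le \scl_\beta(r_i)$, valid for small $r_i$ using $\theta\lambda > 1$ and monotonicity of $\scl_\beta$, yields $\mathcal{H}^{\scl_\gamma}_{\varepsilon^\theta}(f(X)) \le \mathcal{H}^{\scl_\beta}_\varepsilon(X)$, so that in the limit $\mathcal{H}^{\scl_\gamma}(f(X)) \le \mathcal{H}^{\scl_\beta}(X) = 0$ and hence $\Scl_H f(X) \le \gamma$.

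The packing step is where I expect the most care, as disjointness of balls in $Y$ has to be transferred to balls in $X$ centered at chosen preimages. Keeping $\gamma, \beta, \lambda, \theta$ as above, an $\varepsilon$-pack $(B(f(x_i), r_i))_i$ of $f(X)$ pulls back via $B(x_i, r_i^{1/\theta}) \subset f^{-1}(B(f(x_i), r_i))$ to an $\varepsilon^{1/\theta}$-pack of $X$, disjointness being inherited from disjoint preimages. The chain $\scl_\gamma(r_i) \le \scl_\beta(r_i^\lambda) \le \scl_\beta(r_i^{1/\theta})$ (using $1/\theta < \lambda$) then gives $\mathcal{P}_\varepsilon^{\scl_\gamma}(f(X)) \le \mathcal{P}_{\varepsilon^{1/\theta}}^{\scl_\beta}(X)$, hence the pre-measure bound in the limit. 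To pass to the packing measure, I would apply the same pullback argument to countable decompositions: any $X = \bigcup_n E_n$ produces a decomposition $f(X) = \bigcup_n f(E_n)$, and for packs of $f(E_n)$ one selects preimages in $E_n$, obtaining $\mathcal{P}_0^{\scl_\gamma}(f(E_n)) \le \mathcal{P}_0^{\scl_\beta}(E_n)$; taking infima over decompositions of $X$ gives $\mathcal{P}^{\scl_\gamma}(f(X)) \le \mathcal{P}^{\scl_\beta}(X) = 0$.

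For the local scales, the key input is the lower bound $\nu(B(f(x), r)) \ge \mu(B(x, r^{1/\theta}))$, valid for $r$ small. To prove $\underline{\Scl}_{\loc}\nu(f(x)) \le s := \underline{\Scl}_{\loc}\mu(x)$, fix $\gamma > s$, pick $\alpha \in (s, \gamma)$, select $\lambda$ from $\scl_\gamma(\varepsilon) = o(\scl_\alpha(\varepsilon^\lambda))$ and $\theta \in (1/\lambda, 1)$. Since $\alpha > s$ forces $\overline{D}^{\scl_\alpha}_\mu(x) > 0$, some sequence $\varepsilon_n \to 0$ satisfies $\mu(B(x, \varepsilon_n))/\scl_\alpha(\varepsilon_n) \ge c > 0$; setting $r_n := \varepsilon_n^\theta$ and reusing the same chain on $\scl$ yields $\nu(B(f(x), r_n))/\scl_\gamma(r_n) \ge c\,\scl_\alpha(\varepsilon_n)/\scl_\gamma(\varepsilon_n^\theta) \gtrsim c$ for large $n$, so $\overline{D}^{\scl_\gamma}_\nu(f(x)) > 0$ and $\gamma$ lies outside the set defining $\underline{\Scl}_{\loc}\nu(f(x))$. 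The dual inequality for $\overline{\Scl}_{\loc}$ is obtained analogously, replacing the positive-limsup criterion by the infinite-liminf criterion $\underline{D}^{\scl_\alpha}_\mu(x) = +\infty$.
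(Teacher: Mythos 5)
Your proposal is correct and follows essentially the same route as the paper: extract a uniform local H\"older exponent $\theta<1$ from the quasi-Lipschitz hypothesis, match it against the exponent $\lambda>1$ furnished by the scaling condition \cref{eq scl}, push covers forward and pull packs back (taking care of the countable decompositions for the packing measure), and use $\nu(B(f(x),\varepsilon^\theta))\ge\mu(B(x,\varepsilon))$ for the densities. The only cosmetic difference is that you fix a level $\gamma$ above the scale of $X$ and show the relevant measure or density degenerates there, whereas the paper first proves the comparison inequalities $\mathcal{H}^{\scl_\beta}(f(X))\le\mathcal{H}^{\scl_\alpha}(X)$, etc., for all $\beta>\alpha$ and then invokes the definitions; the content is the same.
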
 
\begin{proof}
First observe that it is sufficient to prove that for every $ \beta > \alpha > 0$, the following inequalities hold: 
\[ \mathcal{H}^{\scl_\beta}  ( f(X) ) \underset{\textsc{(i)}}{\le}  \mathcal{H}^{\scl_\alpha}( X )  , \quad  \mathcal{P}^{\scl_\beta} (f(X) ) \underset{\textsc{(ii)}}{\le} \mathcal{P}^{\scl_\alpha} (X  ) \] 
and for every $x \in X$:
\[   \overline{D}^{\scl_\alpha}_\mu(x) \underset{\textsc{(iii)}}{\le}  \overline{D}^{\scl_\beta}_\nu (f(x))   , \quad   \underline{D}^{\scl_\alpha}_\mu( x)  \underset{\textsc{(iv)}}{\le}  \underline{D}^{\scl_\beta}_\nu     f(x)  \;.  \] 
Indeed, then we conclude by the definition of the involved scales. 
To prove these inequalities, take $ \beta > \alpha> 0$. By \cref{def scaling} of scaling, there exists $  0 < \kappa < 1 $ and $ \varepsilon_0 > 0$ such that for every $ 0<  \varepsilon < \varepsilon_0$: 
\begin{equation}\label{eq scl bilip}
\scl_\beta ( \varepsilon^{\kappa} )  < \scl_\alpha( \varepsilon ) \; . 
\end{equation}
As $f$ is quasi-Lipschitz, we can also assume that  $ \varepsilon_0 >0$  is small enough so that for every $ x , x' \in X $ so that $ d_X ( x,y) < \varepsilon_0$, it holds: 
\[ \frac{\log ( d_Y( f(x), f(x') ) ) }{ \log ( d_X ( x,x') )  }  >  \kappa \; , \] 
or equivalently: 
\begin{equation} \label{lip eq scl}
d_Y( f(x), f(x') )  <  ( d_X ( x,x') )^{\kappa} \; . 
\end{equation}
\newline \vspace{0.2cm}
\textsc{Proof of the inequality (i) on Hausdorff measures:} \newline
Consider $ 0 <  \varepsilon < \varepsilon_0 $. For every  countable set $J$ and every $ \varepsilon$-cover  $( B (x_j, \varepsilon_j) )_{j \in J} $ of $X$, it holds: 
\[ f(X)  \subset \bigcup_{j \in J } B ( f(x_j) ,    \varepsilon_j^\kappa ) \; . \]
Then $( B( f(x_j)  , \varepsilon_j^\kappa ) )_{ 1 \le j \le N} $ is a $\varepsilon^\kappa$-cover of $ f(X) $.
By \cref{eq scl bilip}, we obtain: 
\[ \mathcal{H}^{\scl_\beta}_{ \varepsilon^\kappa} ( f(X)) \le \sum_{  j \in J } \scl_\beta ( \varepsilon_j^\kappa) \le \sum_{  j \in J } \scl_\alpha ( \varepsilon_j)  \; .   \]
As this holds true for any such cover, we obtain:
\[ \mathcal{H}^{\scl_\beta}_{ \varepsilon^\kappa} ( f(X))  \le  \mathcal{H}^{\scl_\alpha}_{ \varepsilon} ( X)   \; . \] 
Taking the limit as $ \varepsilon$ goes to $ 0$ provides the desired inequality on Hausdorff measures. 
 \newline \vspace{0.2cm} 

\textsc{Proof of the inequality (ii) on packing measures:}\newline
Similarly, consider $ 0 <  \varepsilon < \varepsilon_0^{1/\kappa} $. Let $ E\subset X$. For every  countable set $J$ and every $ \varepsilon^\kappa$-packing  $( B (f(x_j), \varepsilon_j^\kappa) )_{j \in J} $ of $f(E)$, the family  $( B (x_j, \varepsilon_j) )_{j \in J} $ is an $\varepsilon$-pack of $E$. Thus, still by \cref{eq scl bilip}, it follows:  
\[   \sum_{  j \in J } \scl_\beta ( \varepsilon_j^\kappa)  \le \sum_{  j \in J } \scl_\alpha ( \varepsilon_j)  \le  \mathcal{P}_{\varepsilon}^{\scl_\alpha}  (E)  \; .\] 
As this holds for any such $ \varepsilon^\kappa$-pack, taking $\varepsilon$ small provides $\mathcal{P}_0^{\scl_\beta}  (f(E))  \le  \mathcal{P}_0^{\scl_\alpha} (E)$.  
Now as $E$ is an arbitrary subset of $X$, it follows by the definition of packing measure that $\mathcal{P}^{\scl_\beta} (f(X) ) \le \mathcal{P}^{\scl_\alpha}( X )$. 
\newline \vspace{0.2cm} 

\textsc{Proof of the inequalities (iii) and (iv) on densities of measures:}\newline
Still consider $ \varepsilon$ small and observe that for every $x \in X$, the following sequence of inequalities holds:  
\[ \nu ( B ( f(x) , \varepsilon^\kappa ) ) \ge \nu ( f ( B(x,\varepsilon) )) = \mu ( f^{-1} (  f(B(x,\varepsilon)))) \ge \mu ( B(x, \varepsilon) ) \; .  \]
Then it follows:   
\[ \frac{\mu(B(x,\varepsilon))}{\scl_\alpha ( \varepsilon) }  \le \frac{\nu(B(f(x),\varepsilon^\kappa))}{\scl_\alpha ( \varepsilon) } \le \frac{\nu(B(f(x),\varepsilon^\kappa))}{\scl_\beta( \varepsilon^\kappa) } \; . \]
Taking the $ \limsup $ and $ \liminf$ as $ \varepsilon $ goes to $ 0$ provides the desired results. 
\end{proof}
As a direct application of the above \cref{scl quasiL}, we obtain: 
\begin{coro} \label{scl biquasi}
Let $ ( X,d_X) $ and $ (Y, d_Y)$ be two metric spaces and let $ \Scl$ be a scaling. \newline
Let  $f : X \to Y $ be a quasi-Lipschitz embedding, then their Hausdorff and packing scales coincide: 
\[ \Scl_H f(X) = \Scl_H X  \qand \Scl_P f(X) = \Scl_P X  \;. \]
Moreover, for every measure $ \mu$ on $X$ with $ \nu := f_* \mu $, the pushforward of $ \mu$ by $ f $ is such that for every $x \in X $:
\[ \underline{\Scl}_{\loc} \nu ( f(x)) =   \underline{\Scl}_{\loc}  \mu (x) \qand  \overline{\Scl}_{\loc} \nu ( f(x)) =  \overline{\Scl}_{\loc}  \mu (x)  \; . \] 
\end{coro}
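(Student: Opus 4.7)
The plan is to apply \cref{scl quasiL} in both directions. By definition, a quasi-Lipschitz embedding $f : X \to Y$ has the property that both $f$ and its inverse $f^{-1} : f(X) \to X$ (where $f(X)$ is equipped with the restriction of $d_Y$) are quasi-Lipschitz maps. Thus \cref{scl quasiL} applies to each of them separately, and the two resulting sets of inequalities complement each other to give equalities.

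More precisely, I would first apply \cref{scl quasiL} to $f : X \to Y$ with the measure $\mu$, obtaining
\[ \Scl_H f(X) \le \Scl_H X, \quad \Scl_P f(X) \le \Scl_P X, \]
and for every $x \in X$,
\[ \underline{\Scl}_{\loc} \nu(f(x)) \le \underline{\Scl}_{\loc} \mu(x), \quad \overline{\Scl}_{\loc} \nu(f(x)) \le \overline{\Scl}_{\loc} \mu(x). \]
Next I would apply the same lemma to the quasi-Lipschitz map $g := f^{-1} : f(X) \to X$, equipped with the pushforward measure $\nu = f_*\mu$. Since $g \circ f = \id_X$, we have $g(f(X)) = X$ and $g_* \nu = \mu$. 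The lemma then yields
\[ \Scl_H X \le \Scl_H f(X), \quad \Scl_P X \le \Scl_P f(X), \]
and for every $x \in X$ (applied at the point $f(x) \in f(X)$),
\[ \underline{\Scl}_{\loc} \mu(x) \le \underline{\Scl}_{\loc} \nu(f(x)), \quad \overline{\Scl}_{\loc} \mu(x) \le \overline{\Scl}_{\loc} \nu(f(x)). \]
Combining the two groups of inequalities gives the four equalities claimed in the corollary.

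The only subtlety worth checking is that the quasi-Lipschitz condition is genuinely symmetric in the definition: the hypothesis that $f$ is a quasi-Lipschitz embedding requires precisely that both $f$ and $f^{-1}$ are quasi-Lipschitz, so no additional work is needed. The reduction to \cref{scl quasiL} is then essentially mechanical, with no genuine obstacle to overcome.
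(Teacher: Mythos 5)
Your proposal is correct and is exactly the argument the paper intends: the corollary is stated as a direct application of \cref{scl quasiL}, obtained by applying that lemma to both $f$ and $f^{-1}$ (using $g_*\nu=\mu$ and $g(f(X))=X$) and combining the two sets of inequalities.
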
 
We will apply the above \cref{scl biquasi} in the coming section. 
\subsection{Embeddings in Banach spaces: proof of \cref{main scl}} \label{lieu proof scl}
We now prove \cref{main scl}. A first step is the following result: 
\begin{lemma} \label{prop embed}
Let $ (A , \| \cdot \| )$ be an infinite-dimensional Banach space, then there exists a quasi-Lipschitz embedding $ f : ( E , \delta ) \hookrightarrow ( A , \|\cdot \|)$.
\end{lemma}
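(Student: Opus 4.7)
The plan is to exploit the infinite-dimensionality of $A$ to produce an almost-orthogonal family of unit vectors, and then to encode each coordinate $x_k$ of $\underline{x} \in E$ via one such vector at a geometrically decreasing scale. I would invoke Mazur's basic sequence theorem, which produces in any infinite-dimensional Banach space a Schauder basic sequence; after normalization this yields a sequence $(u_m)_{m \ge 1}$ of unit vectors with some basis constant $K < \infty$. Fixing such a sequence together with an arbitrary bijection $\pi : \N^* \times \N^* \to \N^*$, the candidate embedding would be
\[ f(\underline{x}) := \sum_{k \ge 1} 2^{-k} \, u_{\pi(k, x_k)} , \]
a series which converges absolutely since $\|u_m\| = 1$ and $\sum_k 2^{-k} < \infty$, so that $f : E \to A$ is well defined.

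Next I would estimate $\|f(\underline{x}) - f(\underline{x}')\|$ when $n := \chi(\underline{x}, \underline{x}')$, starting from the identity
\[ f(\underline{x}) - f(\underline{x}') = \sum_{k \ge n} 2^{-k} \bigl( u_{\pi(k, x_k)} - u_{\pi(k, x'_k)} \bigr), \]
which holds because the first $n-1$ coordinates coincide. The triangle inequality immediately delivers the upper bound $\|f(\underline{x}) - f(\underline{x}')\| \le 4 \cdot 2^{-n} = 4\, \delta(\underline{x}, \underline{x}')$. For the matching lower bound, the idea is to isolate one distinguished coordinate: since $\pi$ is a bijection and $x_n \ne x'_n$, the index $m^* := \pi(n, x_n)$ cannot coincide with any other $\pi(k, x_k)$ or $\pi(k, x'_k)$ for $k \ge n$. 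Hence, when the above difference is expanded as $\sum_m b_m u_m$ along the basic sequence, the coefficient $b_{m^*}$ equals exactly $2^{-n}$. Applying the biorthogonal functional at index $m^*$, whose norm is bounded by $2K$, then gives $2^{-n} = |b_{m^*}| \le 2K\, \|f(\underline{x}) - f(\underline{x}')\|$, i.e.\ $\|f(\underline{x}) - f(\underline{x}')\| \ge \delta(\underline{x}, \underline{x}')/(2K)$.

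Combined, the two bounds make $f$ bi-Lipschitz, hence in particular a quasi-Lipschitz embedding in the sense of the paper: the ratio $\log \|f(\underline{x}) - f(\underline{x}')\|/\log \delta(\underline{x}, \underline{x}')$ is uniformly trapped between two constants that tend to $1$ as $\delta \to 0$, which is stronger than required. The step I expect to be the main obstacle is precisely this matching lower bound. A more elementary iterated Riesz-lemma construction would control only the coefficient of the \emph{highest}-indexed vector appearing in a finite sum, whereas the coefficient we must bound from below corresponds to the \emph{lowest} disagreement level $n$, with tail contributions from levels $k > n$ potentially conspiring to mask it. Mazur's theorem bypasses this by providing uniform control on every coefficient functional of the basic sequence, and this is the structural input that makes the argument go through.
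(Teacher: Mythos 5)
Your proof is correct, but it takes a genuinely different route from the paper. The paper renorms $A$ (using the Mercourakis--Vassiliadis theorem on infinite equilateral sets) to obtain vectors $(a_n)$ with $\|a_n - a_m\| = 1$, reuses the \emph{same} vector $a_j$ at every level, and sets $f(\underline{x}) = \sum_k a_{x_k}/(k\,2^k)$. There the lower bound can only come from the reverse triangle inequality, so the head term at level $k_0 = \chi(\underline{x},\underline{x}')$ must strictly dominate the tail; this forces the extra damping factor $1/k$, and the resulting lower bound $\|f(\underline{x})-f(\underline{x}')\| \gtrsim \delta(\underline{x},\underline{x}') \cdot |\log_2 \delta(\underline{x},\underline{x}')|^{-2}$ is why the paper obtains only a quasi-Lipschitz embedding. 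Your use of Mazur's theorem, together with the bijection $\pi$ ensuring that each pair $(k, x_k)$ consumes a fresh basis vector, replaces the reverse triangle inequality by the uniformly bounded coordinate functionals of the basic sequence: the distinguished coefficient $2^{-n}$ at index $m^* = \pi(n, x_n)$ is extracted exactly, with no cancellation from the tail and no damping needed. This yields the two-sided bound $\delta/(2K) \le \|f(\underline{x})-f(\underline{x}')\| \le 4\delta$, i.e.\ a genuinely bi-Lipschitz embedding, which is strictly stronger than what the lemma asks for and also avoids the renorming step. Your identification of the "masking" problem and of why the basic sequence resolves it is exactly right. The only cosmetic imprecision is the phrase about "expanding as $\sum_m b_m u_m$ along the basic sequence": you need not claim convergence of that rearranged expansion in the basis order; it suffices to apply the continuous functional $u_{m^*}^*$ (extended to $A$ by Hahn--Banach if desired) term by term to the absolutely convergent series defining $f(\underline{x}) - f(\underline{x}')$, which lands the same value $2^{-n}$.
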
 
\begin{proof}
Up to replacing the norm $ \| \cdot \| $ by some equivalent norm --  corresponding to a bi-Lipschitz transformation of $A$ -- we can assume by Theorem $1$ in  \cite{mercourakis2014equilateral} that $A$ contains an \emph{ infinite equilateral set}; that is, a countable collection $ (a_n)_{ n \ge 1 } $ of vectors of $ A$ such that for every $ n\neq m$ it holds $ \| a_n - a_m \| = 1 $. 
  Thus we define the embedding: 
  \[ f : \x \in E \hookrightarrow \sum_{ k \ge 1 } \frac{a_{x_k}}{k \cdot 2^k } \in A   \; . \] 
 As $ A$ is a Banach space and the latter sum is normally convergent, the above map $i$ is well defined.  To conclude, it suffices to show that $i$ is a quasi-Lipschitz embedding. To do so, consider $\x, \x' \in  E$ and let $ k_0 := \chi ( \x , \x')   $ be the minimal index so that $ \x $ and $ \x' $ differ. Then we obtain the following inequalities: 
\begin{align*} 
 \| f( \x ) - f (  \x') \| &\ge  k_0^{-1} 2^{-k_0}  - \sum_{k > k_0} k^{-1} 2^{-k}    \\
  &\ge ( k_0^{-1} - (k_0+1)^{-1} )  \cdot 2^{-k_0}  \\
  &\ge   2^{-(k_0 +2)} \cdot k_0^{-2}  \; , 
\end{align*}
that is:  
\begin{equation} \label{eq ilip}
 \| f( \x ) - f (  \x') \| \le \frac{1}{4}  \delta( \x, \x') \left( \log_2 \delta( \x, \x')  \right)^2 \;. 
\end{equation}
Conversely: 
\begin{equation} \label{eq lip}
 \| f( \x ) - f (  \x') \| \le   \sum_{ k \ge k_0 }  k^{-1} \cdot 2^{-k}  \le  2^{- k_0  +1 }  = 2 \delta( \x , \x') \; . 
\end{equation} 
Combining \cref{eq lip} and \cref{eq ilip} provides that for $ \x \neq \x'$, the following sequence of inequalities hold:
\[    1  - \frac{  \log 2  }{  \vert \log \delta( \x , \x') \vert } \le  \frac{\log ( \| f ( \x ) - f (\x') \| ) }{ \log \delta( \x , \x') }  \le  1 + \frac{   \vert \log  ( \log_2 \delta ( \x, \x') )^2  \vert  + \log 4  }{ \vert \log \delta( \x , \x') \vert  }    \; . \]
Taking $ \x'$ arbitrarily close to $ \x $ for every $ \x \in E$ provides that $ f$ is indeed a quasi-Lipschitz embedding. 
\end{proof}
We conclude this section with the proof of \cref{main scl}: 

\begin{proof}[Proof that \cref{main1} implies \cref{main scl}]
Let  $ \varphi :=  \varepsilon \mapsto  \scl_\alpha (\varepsilon)  $ and $ \psi := \varepsilon \mapsto \scl_\beta (\varepsilon /2) $. Obviously by \cref{def scaling} of scaling, the condition given by \cref{cond preceq} is verified. 
Thus by \cref{main1}, there exists a compact product $ K \subset E $ so that $ \mathcal{H}^\varphi ( K ) 
$ and $ \mathcal{P}^\psi ( K ) $ are both finite, non-trivial and proportional to its equilibrium state $ \mu$.
Then it immediately follows that: 
\[ \Scl_H  K = \alpha \qand \Scl_P  K = \beta   \; . \] 
Moreover, the local scales of the equilibrium state $\mu$ of $K$ at a point $\x \in K $ are equal to:
\[  \underline{\Scl}_{\loc} \mu (\x) = \alpha \qand  \overline{\Scl}_{\loc} \mu (\x) = \beta  \; .   \] 
Let then $ f : E \to A $ be the quasi-Lipschitz embedding provided by \cref{prop embed}. 
Then picking $ X := f ( K) $, $ \nu = f_* \mu$ and applying \cref{scl biquasi} allows us to conclude the proof of \cref{main scl}. 
\end{proof}
\section{Hausdorff and packing measure on compact products} \label{lieu 1}
\subsection{Densities of the equilibrium state}
We first link densities of an equilibrium state with Hausdorff and packing by the following: 
\begin{lemma} \label{lem geom}
Let  $ K \subset E  $ be a compact product with equilibrium state $ \mu$. Then for every Borel subset $ X \subset K $ it holds: 
 \[ \mathcal{H}^\phi (X)  = \frac{1 }{\overline{D}_\mu^\phi}  \mu (X)  \quad \text{ if  } \   0 < \overline{D}_\mu^\phi < + \infty  \; \] 
and 
\[ 
 \mathcal{P}^\phi( X)  = \frac{1 }{\underline{D}_\mu^\phi} \mu (X) \quad \text{ if }  \  0  <  \underline{D}_\mu^\phi < + \infty  \; .   \] 
\end{lemma}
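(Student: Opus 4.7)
The plan is to establish the two identities separately, leveraging two structural features of the setting. Translation-invariance of the Haar measure $\mu$ makes both densities $\overline{D} := \overline{D}_\mu^\phi$ and $\underline{D} := \underline{D}_\mu^\phi$ uniformly constant on $K$, so the $\limsup$ and $\liminf$ defining them translate into comparisons between $\mu(B(\x,r))$ and $\phi(r)$ that are uniform in the center $\x$. Moreover, the ultrametric property of $\delta$ ensures that any two open balls of $K$ are either disjoint or nested, so any family of balls can be disjointified by keeping only its maximal elements without losing any point of the covered set. These two facts together bypass the Vitali-type machinery that the general metric situation would require.

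For the Hausdorff equality, the lower bound $\mathcal{H}^\phi(X) \ge \mu(X)/\overline{D}$ follows by applying the definition of $\overline{D}$ as a $\limsup$ to any $\varepsilon$-cover $(B_i)$ of $X$: for $\varepsilon$ small enough, $\mu(B_i) \le (\overline{D}+\eta)\phi(|B_i|)$ uniformly in $i$, so $\mu(X) \le \sum_i \mu(B_i) \le (\overline{D}+\eta)\sum_i \phi(|B_i|)$. For the upper bound, fix $\tau>0$ and use Radon regularity to pick an open $U \supset X$ with $\mu(U) \le \mu(X)+\tau$. The $\limsup$ definition of $\overline{D}$ furnishes, for each $\x \in X$, an arbitrarily small radius $r(\x)$ with $B(\x,r(\x)) \subset U$ and $\mu(B(\x,r(\x))) \ge (\overline{D}-\eta)\phi(r(\x))$. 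Selecting the maximal elements of $\{B(\x,r(\x)) : \x \in X\}$ yields a disjoint countable subfamily $(B_i)$ still covering $X$, whence $\sum_i \phi(|B_i|) \le \sum_i \mu(B_i)/(\overline{D}-\eta) \le \mu(U)/(\overline{D}-\eta) \le (\mu(X)+\tau)/(\overline{D}-\eta)$, and sending $\varepsilon,\tau,\eta \to 0$ closes the identity.

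For the packing equality, the roles of $\limsup$ and $\liminf$ are exchanged. The $\liminf$ definition of $\underline{D}$ provides a uniform upper bound $\phi(r) \le \mu(B(\x,r))/(\underline{D}-\eta)$ valid for all small $r$, together with, along certain small radii at each $\x$, a reverse bound $\phi(r) \ge \mu(B(\x,r))/(\underline{D}+\eta)$. The lower bound $\mathcal{P}^\phi(X) \ge \mu(X)/\underline{D}$ is reduced to the sub-claim $\mathcal{P}_0^\phi(E) \ge \mu(E)/\underline{D}$ for every Borel $E \subset K$: for each $\x \in E$ pick a small ball realizing the reverse bound and extract the maximal elements to get a disjoint $\varepsilon$-pack of $E$ with $\phi$-sum at least $\mu(E)/(\underline{D}+\eta)$; summing over any countable cover $X \subset \bigcup_n E_n$ gives the claim. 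The upper bound $\mathcal{P}^\phi(X) \le \mu(X)/\underline{D}$ uses the uniform upper bound on $\phi$: by partitioning $X$ into fine pieces concentrated around $\mu$-density points of $X$, any $\varepsilon$-pack of a given piece stays inside a neighborhood of essentially the same mass, so $\mathcal{P}_0^\phi$ of each piece is controlled by the $\mu$-mass of the piece itself, up to a factor approaching $1/\underline{D}$.

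The main obstacle in both the Hausdorff upper bound and the packing lower bound is ensuring that the disjoint family extracted from a point-wise cover really covers all of $X$, not merely a subset of full $\mu$-measure. In a general metric space this would require a Vitali covering theorem and leave a $\mu$-null residue to be handled separately; the ultrametric dichotomy sidesteps the issue completely, since maximal balls in any family are automatically pairwise disjoint and their union equals the union of the original family. The remaining delicate point is the packing upper bound, where a Lebesgue-differentiation style argument (\emph{viz.}, $\mu$-almost every point of $X$ is a $\mu$-density point of $X$) is needed to tighten the bound from $\mu(K)/\underline{D}$ down to $\mu(X)/\underline{D}$; the compact-product structure makes this step especially clean because cylinder-neighborhoods have completely controlled mass.
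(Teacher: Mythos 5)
Your argument is correct in substance, but it follows a genuinely different route from the paper. The paper first computes the two quantities for the \emph{whole} space $K$: since $\mu$ is the Haar measure, the mass of a ball depends only on its radius, so the uniform two-sided comparisons between $\mu(B)$ and $\phi(|B|)$ apply to \emph{every} ball in \emph{every} cover or pack of $K$, giving $\mathcal{H}^\phi(K)=1/\overline{D}_\mu^\phi$ and $\mathcal{P}_0^\phi(K)=1/\underline{D}_\mu^\phi$ directly. It then passes to arbitrary Borel $X$ by homogeneity: $K$ is the disjoint union of $1/\mu(B)$ balls all isometric to $B$, so $\mathcal{H}^\phi(B)=\mathcal{H}^\phi(K)\mu(B)$ and $\mathcal{P}_0^\phi(B)=\mathcal{P}_0^\phi(K)\mu(B)$ for every ball, and Carath\'eodory extension finishes the job. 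You instead attack arbitrary Borel sets head-on with covering/packing arguments, outer regularity of $\mu$, and the ultrametric disjointification of maximal balls. Your route is longer but more robust: it never uses the group structure beyond the constancy of the densities, and it is essentially the strong-Vitali argument of Edgar that the paper cites as an alternative. The paper's route buys brevity by exploiting homogeneity.

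Two remarks on the one place where your sketch is thin, the packing upper bound. First, the Lebesgue-differentiation machinery is more than you need: outer regularity alone suffices. Take open $U\supset X$ with $\mu(U)\le\mu(X)+\tau$; in the ultrametric space $U$ is a countable disjoint union of cylinders $C_j$, and any $\varepsilon$-pack of $X\cap C_j$ with $\varepsilon$ below the radius of $C_j$ consists of disjoint balls contained in $C_j$, so $\mathcal{P}_0^\phi(X\cap C_j)\le\mu(C_j)/(\underline{D}_\mu^\phi-\eta)$ by the uniform bound $\phi(|B|)\le\mu(B)/(\underline{D}_\mu^\phi-\eta)$; summing over $j$ gives $\mathcal{P}^\phi(X)\le(\mu(X)+\tau)/(\underline{D}_\mu^\phi-\eta)$ with no density points at all. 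Second, if you do insist on the density-point decomposition $X=N\cup\bigcup_m Y_m$, note that bounding $\mathcal{P}_0^\phi(Y_m)$ by the mass of a \emph{neighborhood} of $Y_m$ and summing over $m$ can diverge (the closures $\overline{Y_m}$ may all have full measure in $X$); you must first enclose each $Y_m$ in an open set of excess measure $\tau 2^{-m}$ and work cylinder by cylinder inside it, and you must separately check $\mathcal{P}^\phi(N)=0$ for the exceptional null set. Both fixes are routine, but as written your sketch does not guarantee the neighborhood masses sum to approximately $\mu(X)$.
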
 
The proof of the equality for packing measure can be, for instance, directly deduced from a more general result of Edgar \cite{edgar2000}[Theorem 2.5] relating packing measures to extrema of the lower densities of a measure verifying a \emph{strong Vitali property}; see \cite{edgar2000}[Section $2$]. 
However, in the considered examples, the proof is quite straightforward so we provide it for the sake of completeness: 
\begin{proof}
We first show: 
\begin{equation} \label{goal dens lem1}
 \mathcal{H}^\phi ( K ) \cdot \overline{D}^\phi_\mu \underset{\textsc{(a)}}{=} 1   \quad \text{ if } \  0 < \overline{D}_\mu^\phi < + \infty   \qand  \mathcal{P}_0^\phi (K)   \cdot \underline{D}_\mu^\phi \underset{\textsc{(b)}}{=} 1 \quad \text{ if } \   0 < \underline{D}_\mu^\phi < + \infty  \; . 
\end{equation}
\textsc{Proof of the equality (a) for Hausdorff measure:}\newline
Fix $ \delta > 0 $. Then for every sufficiently small $ \varepsilon > 0$, every open ball $B$ of radius at most $ \varepsilon$ verifies: 
\begin{equation} \label{eps small}
 \phi ( \vert B \vert ) \ge   \frac{\mu ( B) }{ \overline{D}_\mu^\phi + \delta} \; . 
\end{equation}
For such a small $ \varepsilon$,  consider  an $ \varepsilon$-cover $ ( B_j)_{ j \in J} $  of $ K$. Then by \cref{eps small}, it follows: 
\begin{equation}
\sum_{j \in J  }  \phi ( \vert  B_j \vert  ) \ge  \frac{1}{\overline{D}_\mu^\phi + \delta}    \sum_{j \in J  } \mu ( B_j) \ge   \frac{1}{\overline{D}_\mu^\phi + \delta}   \; .
\end{equation}
As this holds for any $ \varepsilon$-cover and $ \delta$ can be taken arbitrarily small, we obtain: 
\[ \mathcal{H}^\phi ( K) \ge  \frac{1}{\overline{D}_\mu^\phi } \; . \] 
To show the reverse inequality, fix again $ \delta >0 $ and note that for every $ \varepsilon > 0 $ there exists $  \eta =  2^{-k} \in (0 ,  \varepsilon) $ such that a ball of radius $ \eta $ has its mass greater than $ \phi ( \eta) \cdot (\overline{D}_\mu^\phi - \delta)^{-1}$. Then for the minimal cover $( B_j)_{ j \in J} $ of $ K $ by balls of radius $ \eta$, i.e. $ J  $ has cardinal $ \mu( B_j)^{-1}$ for every $ j \in J $, we obtain: 
\[  \mathcal{H}_\varepsilon^\phi ( K  ) \le  \sum_{ j  \in J } \phi ( \eta ) \le \frac{1}{\overline{D}_\mu^\phi - \delta}   \sum_{ j  \in J } \mu ( B_j )  =    \frac{1}{\overline{D}_\mu^\phi - \delta} \; . \] 
Taking $ \varepsilon$ and $\delta$ small provides $ \mathcal{H}^\phi ( K)  \le \frac{1}{\overline{D}_\mu^\phi }$ and allows us to conclude the proof of the equality for the Hausdorff measure.  \newline
\textsc{Proof of the equality (b) for the packing measure:}
The proof for packing measure is actually quite similar.
Fix $ \delta > 0 $. For every sufficiently small $ \varepsilon > 0$, every ball $B$ of radius at most $ \varepsilon$ verifies: 
\begin{equation} \label{eps small2}
 \phi ( \vert B \vert ) \le   \frac{\mu ( B) }{ \underline{D}_\mu^\phi  - \delta} \; . 
\end{equation}
For such a small $ \varepsilon$,  consider  an $ \varepsilon$-packing $ ( B_j)_{ j \in J} $  of $ K$. Then by \cref{eps small2}, it follows: 
\begin{equation}
\sum_{j \in J  }  \phi ( \vert  B_j \vert  ) \le  \frac{1}{\underline{D}_\mu^\phi - \delta}    \sum_{j \in J  } \mu ( B_j) \le   \frac{1}{\overline{D}_\mu^\phi - \delta}   \; .
\end{equation}
As this holds for any $ \varepsilon$-pack and $ \delta$ can be taken arbitrarily small, we obtain: 
\[ \mathcal{P}_0^\phi ( K) \le  \frac{1}{\underline{D}_\mu^\phi } \; . \] 
To show the reverse inequality, fix again $ \delta >0 $ and note that for every $ \varepsilon > 0 $ there exists $  \eta =  2^{-k} \in (0 ,  \varepsilon) $ such that a ball of radius $ \eta $ has its mass smaller  than $ \phi ( \eta) \cdot (\underline{D}_\mu^\phi + \delta)^{-1}$. Then, the minimal cover $( B_j)_{ j \in J} $ of $ K $ by balls of radius $ \eta$  is an $ \varepsilon$-pack and thus verifies: 
\[  \mathcal{P}_\varepsilon^\phi ( K  ) \ge  \sum_{ j  \in J } \phi ( \eta ) \ge \frac{1}{\underline{D}_\mu^\phi + \delta}   \sum_{ j  \in J } \mu ( B_j )  =    \frac{1}{\underline{D}_\mu^\phi  + \delta} \; . \] 
Taking $ \varepsilon$ small provides then $ \mathcal{P}_0^\phi ( K) \ge  \frac{1}{\underline{D}_\mu^\phi }$ which concludes the proof of that second equality. \newline
We  now finish the proof of \cref{lem geom}. 
Let $ B$ be an arbitrary ball of $X$ with radius $ r > 0$. Let $ ( B_j)_{ 1 \le j \le N} $ be the $ N = \frac{1}{\mu (B) }$ disjoint balls of radius $ r$. Then, for $\varepsilon < r$,  any $\varepsilon$-cover (resp. $\varepsilon$-pack) can be partitioned into $\varepsilon$-covers (resp. $\varepsilon$-packs)  of the balls $ (B_j)_{1 \le j \le N} $. Now as all the balls $B_j$ are isometric to $B$ it follows: 
\[  \mathcal{H}^\phi ( K) = \sum_{j=1}^N \mathcal{H}^\phi ( B_j) = N \cdot  \mathcal{H}^\phi ( B) \qand \mathcal{P}_0^\phi ( K) = \sum_{j=1}^N \mathcal{P}_0^\phi ( B_j) = N \cdot  \mathcal{P}_0^\phi ( B)  \; .  \] 
We have just shown: 
\[  \mathcal{H}^\phi ( B) = \mathcal{H}^\phi ( K) \cdot \mu(B) \qand  \mathcal{P}_0^\phi ( B) = \mathcal{P}_0^\phi ( K) \cdot \mu(B)  \; .  \] 
Finally, as $ \mathcal{H}$  and $ \mathcal{P}_0$ are pre-measures on $K$, we obtain the desired equality for every subset $X $ of $ K$ by  Carath{\'e}odory's extension theorem and \cref{goal dens lem1}. 
\end{proof}
  
The latter lemma provides that it is sufficient to evaluate densities of equilibrium states to obtain the corresponding Hausdorff and packing measures. Moreover, these densities are given by the following lemma. 

\begin{lemma}\label{fact contdiscr}   Let $ \phi \in \H $.
The densities of the equilibrium state $ \mu$ are given for every $ \x \in  K$ by: 
\begin{equation}
\underline{D}_\mu^\phi = \liminf_{ k \to + \infty} \frac{\mu ( B (\x,2^{-k} ))  }{ \phi( 2^{-k} ) }   \qand \overline{D}_\mu^\phi = \limsup_{ k \to + \infty} \frac{\mu ( B (\x,2^{-k} ))  }{ \phi( 2^{-(k+1)} ) }  \; ,  
\end{equation}   
where $ B( \underline{x} , \varepsilon) $ is the open ball of radius $ \varepsilon $ centered at $ \underline{x}$. 
\end{lemma}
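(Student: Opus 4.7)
The plan is to exploit the discrete nature of the ultrametric $\delta$: the distance $\delta$ only takes values in $\{0\} \cup \{2^{-n} : n \geq 1\}$, so the open ball $B(\x, \varepsilon)$ depends on $\varepsilon > 0$ only through the integer $k \geq 0$ such that $\varepsilon \in (2^{-(k+1)}, 2^{-k}]$. More precisely, for such $\varepsilon$, the condition $\delta(\x, \x') < \varepsilon$ is equivalent to $\chi(\x, \x') \geq k+1$, meaning the first $k$ coordinates of $\x$ and $\x'$ coincide. Consequently, as already recorded in equation~\eqref{eq mu cover}, the mass $\mu(B(\x, \varepsilon))$ equals $\prod_{j=1}^k n_j^{-1}$ and is constant on each dyadic interval $(2^{-(k+1)}, 2^{-k}]$.

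First I would make this observation precise and note that the map $\varepsilon \mapsto \mu(B(\x, \varepsilon))$ is piecewise constant, with jumps (downwards) exactly at the points $\varepsilon = 2^{-k}$, $k \geq 1$. Then, since $\phi \in \H$ is continuous and non-decreasing with $\phi > 0$ on $\R_+^*$, the denominator $\phi(\varepsilon)$ is continuous and non-decreasing on each interval $(2^{-(k+1)}, 2^{-k}]$, with infimum $\phi(2^{-(k+1)})$ and maximum $\phi(2^{-k})$. Hence on such an interval the ratio $\mu(B(\x, \varepsilon))/\phi(\varepsilon)$ attains its infimum at the right endpoint $\varepsilon = 2^{-k}$ and its supremum is approached (by continuity of $\phi$) as $\varepsilon \to 2^{-(k+1)}^+$, giving respectively
\[
\inf_{\varepsilon \in (2^{-(k+1)}, 2^{-k}]} \frac{\mu(B(\x,\varepsilon))}{\phi(\varepsilon)} = \frac{\mu(B(\x, 2^{-k}))}{\phi(2^{-k})}, \qquad \sup_{\varepsilon \in (2^{-(k+1)}, 2^{-k}]} \frac{\mu(B(\x,\varepsilon))}{\phi(\varepsilon)} = \frac{\mu(B(\x, 2^{-k}))}{\phi(2^{-(k+1)})}.
\]

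Finally, I would decompose the continuous $\liminf_{\varepsilon \to 0}$ and $\limsup_{\varepsilon \to 0}$ along the exhausting partition $(0, 2^{-K}] = \bigsqcup_{k \geq K} (2^{-(k+1)}, 2^{-k}]$. Since the infimum of a function over a union is the infimum of its infima on each piece (and similarly for suprema), this yields
\[
\underline{D}_\mu^\phi = \lim_{K \to \infty} \inf_{k \geq K} \frac{\mu(B(\x, 2^{-k}))}{\phi(2^{-k})} = \liminf_{k \to \infty} \frac{\mu(B(\x, 2^{-k}))}{\phi(2^{-k})},
\]
and the analogous identity with $\limsup$ in terms of $\phi(2^{-(k+1)})$ for the upper density. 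The argument is essentially a direct computation; the only minor points that require care are the distinction between open ball inequality and the endpoint $\varepsilon = 2^{-k}$, and the use of continuity of $\phi$ to replace the (unattained) infimum on a half-open interval by $\phi(2^{-(k+1)})$.
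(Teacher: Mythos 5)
Your proposal is correct and follows essentially the same route as the paper: both exploit that $\mu(B(\x,\varepsilon))$ is constant on each dyadic interval $(2^{-(k+1)},2^{-k}]$, use monotonicity of $\phi$ to bound the ratio between its values at the two endpoints, and invoke continuity of $\phi$ to identify the (unattained) supremum with $\mu(B(\x,2^{-k}))/\phi(2^{-(k+1)})$. Your phrasing via exact infima and suprema on each block, followed by decomposing the $\liminf$/$\limsup$ along the exhausting partition, is just a slightly more systematic packaging of the paper's three inequalities.
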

  
\begin{proof}
Consider $ \x \in K$. 
First note that: 
\[ \liminf_{\varepsilon \to 0} \frac{\mu ( B (\x,\varepsilon))}{\phi(\varepsilon)} \le \liminf_{k \to + \infty} \frac{\mu ( B (\x,2^{-k}))}{\phi(2^{-k})} \; , \]
providing:
\begin{equation} \label{eq cd1}
  \underline{D}_\mu^\phi \le \liminf_{ k \to + \infty} \frac{\mu ( B (\x,2^{-k} ))  }{ \phi( 2^{-k} ) } \; . 
\end{equation} 
Now observe that for every $ \varepsilon \in ( 0,1) $ , there exists a unique integer $ k $ such that $2^{-(k+1)} < \varepsilon \le 2^{-k}$. It verifies $ B ( \x , \varepsilon) = B ( \x , 2^{-k} )$, and thus as $ \phi $ is non-decreasing, we obtain: 
\begin{equation*} 
\frac{\mu(B(\x, 2^{-k} ) ) }{\phi(2^{-k}) } \le  \frac{\mu(B(\x, \varepsilon	 ) ) }{\phi(\varepsilon ) }  \le \frac{\mu(B(\x, 2^{-k} ) ) }{\phi(2^{-(k+1)}) } \; .
\end{equation*}
As such a $k$ exists for every $ \varepsilon < 1$ we obtain:
\begin{equation}\label{eq cd2}
  \liminf_{ k \to + \infty} \frac{\mu ( B (\x,2^{-k} ))  }{ \phi( 2^{-k} ) }   \le \underline{D}_\mu^\phi   \le  \overline{D}_\mu^\phi \le  \limsup_{ k \to + \infty} \frac{\mu ( B (\x,2^{-k} ))  }{ \phi( 2^{-(k+1)} ) }
\end{equation}
   
For every fixed integer $ k \ge 1$, by continuity and positivity of $ \phi $ at $2^{-(k+1)}$, then any $ \varepsilon_k \in [ 2^{-k} , 2^{-k+1} ) $ sufficiently close to $ 2^{-(k+1)} $  verifies: 
 \[  \vert  \phi (  \varepsilon_k) - \phi ( 2^{-(k+1)} ) \vert  \le \frac{\phi( 2^{-(k+1)} )  }{k}  \;.  \] 
Fixing such a value of $ \varepsilon_k$ for every integer $k$ provides a sequence $ ( \varepsilon_k)_{ k \ge 1 } $ such that for every $ k \ge 1$:
\[ 2^{-(k+1)} < \varepsilon_{k} \le 2^{-k} \qand  \lim_{k \to +\infty} \frac{\phi(2^{-(k+1)} ) }{\phi ( \varepsilon_k) } = 1 \; . \; \]

Then by writing: 
\[ \frac{\mu ( B ( \x  , \varepsilon) ) }{\phi ( \varepsilon) } = 
\frac{\mu ( B ( \x  , 2^{-k} ) ) }{\phi(2^{-(k+1)}) } \cdot  \frac{\phi(2^{-(k+1)} ) }{\phi ( \varepsilon) } 
\; , \]
for every $k \ge 1$ and taking the limit as $k$ goes to infinity, we obtain: 
\begin{equation}\label{eq cd3}
\overline{D}_\mu^\phi \ge \limsup_{ k \to + \infty} \frac{\mu ( B ( \x  , \delta_k) ) }{\phi ( \delta_k) } =\limsup_{ k \to + \infty} \frac{\mu ( B (\x,2^{-k} ))  }{ \phi( 2^{-(k+1)} ) }  \;. 
\end{equation}
Combining \cref{eq cd1,eq cd2,eq cd3} concludes the proof of \cref{fact contdiscr}. 
\end{proof}

\subsection{Construction of the compact products and proof of \cref{main1} } 
We now provide the elementary construction of the adapted sequence of cardinals of the corresponding compact product as stated below in \cref{prop seq main}.  We first introduce a few notations. 
Consider the set $ R$ of non-decreasing unbounded positive sequences: 
 \[   R := \left\{ (a_k)_{ k \ge 1} \in  \mathbb{R_+^*}^{\N^*} :  \lim_k a_k = + \infty \qand  a_{k+1} \ge a_k , \ \forall k \ge 1 \right\}  \; . \]
 We also denote $ \underline{a} = (a_k)_{ k \ge 1 } $ as an element of $R$  and use this same notation for $b,u,v$ and $n$. 
We shall write $ \underline{a} \le \underline{b}$ if the sequences  $\underline{a} , \underline{b}  \in R $ verify $ a_k \le b_k $ for every $ k \ge 1$. The second ingredient in the proof of  \cref{main1} is: 
\begin{prop} \label{prop seq main}
Let $ \underline{a} \le \underline{b} $ be two elements of $ R $. Then there exists a sequence of positive integers $ \underline{v} \in E $ such that  $ v_{k} $ divides $ v_{k+1}$ for every $ k \ge 1$, while:
\begin{equation} \label{prop main ineq}
1 \le \limsup_{ k \to + \infty}  \frac{a_k}{v_k}  \le 2  \qand  1 \le \liminf_{ k \to + \infty} \frac{b_k}{v_k} \le 2 \; . 
\end{equation}
\end{prop}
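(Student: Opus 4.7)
The plan is to construct $v_k$ by induction, alternating between two regimes that I will call Mode A (\emph{wait}) and Mode B (\emph{boost}). In Mode A we keep $v_k$ frozen as long as $a_k \le 2 v_{k-1}$, so that $a_k$ slowly catches up; when $a_k$ overshoots, a single growth step pushes $v_k$ into $[a_k/2, a_k)$ and we switch to Mode B. Mode B is a single growth step that places $v_k$ into $[b_k/2, b_k]$, then switches back. Concretely, fix $k_0 := \min\{k \ge 1 : b_k \ge 1\}$, take $v_k = 1$ for $k < k_0$, and start the recursion in Mode A with $v_{k_0-1} = 1$. For $k \ge k_0$ and given $v_{k-1}$: in Mode A, if $a_k \le 2 v_{k-1}$ take $v_k := v_{k-1}$, otherwise put $n_k := \lceil a_k / (2 v_{k-1}) \rceil$, $v_k := n_k v_{k-1}$ and switch to Mode B; in Mode B, put $n_k := \lfloor b_k / v_{k-1} \rfloor$ (which is $\ge 1$ by the inductive bound $v_{k-1} \le b_{k-1} \le b_k$), $v_k := n_k v_{k-1}$ and switch back to Mode A. Divisibility $v_k \mid v_{k+1}$ is then built in.

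The key invariants, preserved from step $k_0$ on by a short case analysis, are $v_k \le b_k$ and $a_k/v_k \le 2$. In a Mode A growth step the assumption $v_{k-1} < a_k/2$ gives $v_k \in [a_k/2, a_k/2 + v_{k-1}) \subset [a_k/2, a_k)$, hence $v_k < a_k \le b_k$ and $a_k/v_k \in (1, 2]$. In a Mode B step, the choice of $n_k$ forces $v_k \in [b_k/2, b_k]$: either $v_{k-1} > b_k/2$ so $n_k = 1$ and $v_k = v_{k-1} \in (b_k/2, b_k]$, or $v_{k-1} \le b_k/2$ so $n_k \ge 2$ and $v_k \ge b_k - v_{k-1} \ge b_k/2$. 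In a Mode A hold step both invariants are immediate from $a_k \le 2 v_{k-1}$ and $v_{k-1} \le b_{k-1} \le b_k$. These invariants yield $\limsup_k a_k/v_k \le 2$ and $\liminf_k b_k/v_k \ge 1$. For the reverse bounds, the mode alternation continues forever: after any Mode B step the value $v_k$ is frozen throughout the following Mode A phase while $a_k \to +\infty$, so eventually $a_k > 2 v_{k-1}$ forces a Mode A growth step, which is immediately followed by another Mode B step. Each Mode A growth step gives $a_k/v_k > 1$, so $\limsup_k a_k/v_k \ge 1$; each Mode B step gives $b_k/v_k \le 2$, so $\liminf_k b_k/v_k \le 2$.

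The main subtlety I anticipate is robustness to arbitrarily large jumps of $\underline{a}$ and $\underline{b}$, which would break a naive ``grow one integer at a time'' rule. The ceiling $\lceil a_k/(2 v_{k-1}) \rceil$ and floor $\lfloor b_k / v_{k-1} \rfloor$ multipliers are designed to absorb any such jump in a single step: the admissible interval for the integer multiplier has length at least one precisely when growth is actually called for, so the construction never stalls and every jump of $\underline{a}$ or $\underline{b}$ is immediately matched.
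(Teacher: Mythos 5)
Your proof is correct and is essentially the paper's own argument: the same greedy ``wait until $\underline{a}$ overshoots, then jump to within a factor $2$ of $a_k$, then to within a factor $2$ of $b_k$'' scheme, with divisibility enforced by multiplying the previous term by a floor/ceiling ratio. The only difference is organizational — you run it as a single two-mode induction, whereas the paper factors it through an intermediate oscillating sequence $\underline{u}$ (\cref{osc lemma}) followed by a separate rounding step (\cref{r to prod}).
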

  
This proposition is proven below using the following Lemmas \ref{osc lemma} and \ref{r to prod}. Let us first see how \cref{lem geom}, \cref{prop seq main} and \cref{fact contdiscr} allow us to obtain:    
\begin{proof}[Proof of \cref{main1}]
As Hausdorff and packing measures are linear with respect to Hausdorff functions, we can assume that $ C = 1 $ in \cref{cond preceq}, and this is up to multiplying $ \varphi $ or $ \psi $ by a scalar. \newline
Let then $ \underline{a}, \underline{b} \in R $ be the sequences defined for $ k \ge 1$ by: 
\begin{equation} \label{def ab}
 a_k :=  \frac{1}{\varphi ( 2^{-(k+1)} ) } \qand b_k :=  \frac{1}{\psi ( 2^{-k} ) } \; .
\end{equation}
Thus by \cref{cond preceq}, since $ C= 1$, the inequality $ \underline{a} \le \underline{b}$ obviously holds. Let then $ \underline{v} \in E  $ be the sequence provided by \cref{prop seq main} and consider the sequence $ \underline{n} \in E $ defined for $k \ge 1 $ by $ n_{k} = \frac{v_k}{v_{k-1}} \in \N^*$ with $v_0 := 1$. The compact product that we consider is:
 \begin{equation} \label{def K}
 K := \prod_{ k \ge 1} \lbrace 1 , \dots , n_k \rbrace \subset E \; . 
\end{equation}
 
Then, \cref{fact contdiscr} applied to $ \phi = \varphi $ and then $ \phi = \psi$ provides:  
\begin{equation*} 
\underline{D}_\mu^\psi = \liminf_{ k \to + \infty} \frac{\mu ( B (\x,2^{-k} ))  }{ \psi( 2^{-k} ) }  =  \liminf_{ k \to + \infty} \frac{b_k}{v_k}  \qand \overline{D}_\mu^\varphi =\limsup_{ k \to + \infty} \frac{\mu ( B (\x,2^{-k} ))  }{ \varphi( 2^{-(k+1)} ) }  =  \limsup_{ k \to + \infty} \frac{a_k }{v_k }  \; .   
\end{equation*}
Thus by \cref{prop seq main}, we obtain: 
\[ 1 \le \underline{D}_\mu^\psi \le 2 \qand 1 \le \overline{D}_\mu^\varphi \le 2  \;. \]
As $\underline{D}_\mu^\psi $ and $  \overline{D}_\mu^\varphi$ are both finite and non-zero, we conclude the proof by a direct application of \cref{lem geom}.
\end{proof}

\begin{lemma} \label{osc lemma}
For every $ \underline{a} \le \underline{b} \in R$, there exists  $ \underline{u} \in R $  with $ \underline{a} \le \underline{u} \le \underline{b}$ and there exists an increasing sequence of integers $( T_\ell)_{ \ell \ge 1} $ such that: 
\begin{equation}
u_{T_\ell} = a_{T_\ell} \qand u_{T_{\ell}+1} = b_{T_{\ell}+1}  \; 
\end{equation}
for every $ \ell \ge 1$. 
\end{lemma}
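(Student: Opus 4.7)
My plan is to build the sequence $\underline{u}$ and the oscillation times $(T_\ell)_{\ell\ge 1}$ by a simple inductive "plateau" construction, alternating between touching $\underline{a}$ at index $T_\ell$ and immediately jumping to touch $\underline{b}$ at index $T_\ell+1$, and then waiting on a plateau until $\underline{a}$ has grown enough for the next cycle to start without breaking monotonicity.

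More concretely, I set $T_1:=1$ and $u_1:=a_1$. Having defined $T_\ell$ and the values $u_1,\dots,u_{T_\ell}$, I declare
\[
u_{T_\ell+1}:=b_{T_\ell+1},\qquad T_{\ell+1}:=\min\bigl\{k>T_\ell+1:\ a_k\ge b_{T_\ell+1}\bigr\},
\]
and then fill in the plateau $u_k:=b_{T_\ell+1}$ for all $T_\ell+1<k<T_{\ell+1}$, finally setting $u_{T_{\ell+1}}:=a_{T_{\ell+1}}$ to open the next cycle. The minimum defining $T_{\ell+1}$ is reached because $\underline{a}\in R$ is unbounded, so $T_{\ell+1}$ is a well-defined integer with $T_{\ell+1}\ge T_\ell+2$, and the sequence $(T_\ell)_\ell$ is strictly increasing by construction.

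It then remains to check the three required properties of $\underline{u}$. The sandwich $\underline{a}\le\underline{u}\le\underline{b}$ holds because on each plateau, $u_k=b_{T_\ell+1}\le b_k$ by monotonicity of $\underline{b}$, and $u_k=b_{T_\ell+1}>a_k$ for every $T_\ell+1\le k<T_{\ell+1}$ by minimality of $T_{\ell+1}$; at the endpoints, $u_{T_\ell}=a_{T_\ell}\le b_{T_\ell}$ by hypothesis. Monotonicity of $\underline{u}$ is only non-trivial at two types of jumps: the upward jump $u_{T_\ell}=a_{T_\ell}\le a_{T_\ell+1}\le b_{T_\ell+1}=u_{T_\ell+1}$ (using $\underline{a}\le\underline{b}$ and the fact that $\underline{a}$ is non-decreasing), and the jump $u_{T_{\ell+1}-1}=b_{T_\ell+1}\le a_{T_{\ell+1}}=u_{T_{\ell+1}}$, which is exactly the defining inequality of $T_{\ell+1}$. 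Unboundedness of $\underline{u}$ follows from $u_{T_\ell}=a_{T_\ell}\to+\infty$.

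The only real delicate point, and thus the main potential obstacle, is ensuring that the wait time until $\underline{a}$ catches up to the plateau height is finite so that $T_{\ell+1}$ is actually an integer; this is precisely where the assumption $\lim_k a_k=+\infty$ built into the definition of $R$ is used. Everything else is bookkeeping on a piecewise-constant-and-jump construction.
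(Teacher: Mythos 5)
Your proposal is correct and follows essentially the same route as the paper: the paper also defines $T_{\ell+1}$ as the first index past $T_\ell+1$ where $\underline{a}$ overtakes the plateau value $b_{T_\ell+1}$, sets $u_{T_\ell}=a_{T_\ell}$ and $u_k=b_{T_\ell+1}$ on the intervening plateau, and invokes unboundedness of $\underline{a}$ for well-definedness. The only differences are cosmetic (strict versus non-strict inequality in the definition of $T_{\ell+1}$ and the initialization at $T_1=1$ rather than $T_0=0$), and your monotonicity check is a welcome elaboration of the paper's ``clear by construction.''
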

\begin{proof}
Let $ T_0 = 0$. For $ \ell \ge 1 $, we define recursively: 
\begin{equation} \label{def time}
T_{\ell +1 } := \inf \lbrace  k >  T_\ell + 1  : a_{k} > b_{T_\ell+1} \rbrace  \; . 
\end{equation}
As  $ \underline{a} $  grows to infinity, each $ T_\ell $ is finite and well defined.
Moreover, $ ( T_\ell)_{\ell\ge 1} $ is increasing.\newline
Then define the sequence $ \underline{u}$ for $ k \ge 1$ by:
\begin{equation} \label{def vk alt}
   u_{k}:=  
\begin{dcases} 
    \hspace{1cm} a_{T_\ell}  & \text{ if }  k = T_\ell \text{ with } \ell \ge 1   , \\
  \hspace{1cm}   b_{T_\ell +1 }            &  \text{ if } T_\ell <  k < T_{\ell+1} \text{ with } \ell \ge 0  \; . 
\end{dcases}
\end{equation}
It  is then clear by construction that the sequence $\underline{u} $ satisfies the desired properties. 
\end{proof}
Given a sequence $\underline{u} \in R$, we can always find a product of integers growing like $\underline{u}$ according to the following:  
\begin{lemma} \label{r to prod}
For every $\underline{u} \in R$, there exists a sequence of positive integers $\underline{v}= ( v_k)_{k \ge 1 } \in E$ such that for every $ k \ge 1 $ the term $ v_{k} $ divides $ v_{k+1} $ and for $k$ sufficiently large, it holds: 
\begin{equation*}
 \frac{u_k}{2}   \le v_k \le u_k \; . 
\end{equation*}
\end{lemma}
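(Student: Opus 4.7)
The plan is to build $\underline{v}$ recursively so that each $v_k$ is either equal to $v_{k-1}$ or equal to the largest multiple of $v_{k-1}$ that does not exceed $u_k$. Since $u_k \to +\infty$, there is a smallest index $k_0 \ge 1$ with $u_{k_0} \ge 2$; I would start by setting $v_k := 1$ for $1 \le k < k_0$, which is harmless because the target inequalities only need to hold for $k$ sufficiently large.

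For $k \ge k_0$, with the convention $v_{k_0-1} := 1$, I would set
\[
v_k := \begin{dcases} \left\lfloor \frac{u_k}{v_{k-1}} \right\rfloor v_{k-1} & \text{ if } v_{k-1} \le u_k/2, \\ v_{k-1} & \text{ otherwise.} \end{dcases}
\]
Each $v_k$ is then a positive integer and $v_{k-1}$ divides $v_k$ by construction. The main claim to verify is that $u_k/2 < v_k \le u_k$ for every $k \ge k_0$, which I would establish by induction. For the base case, $v_{k_0} = \lfloor u_{k_0} \rfloor$ lies in $(u_{k_0}/2, u_{k_0}]$ because $u_{k_0} \ge 2$. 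For the inductive step, monotonicity of $\underline{u}$ together with the induction hypothesis give $v_{k-1} \le u_{k-1} \le u_k$. In the first case, $\lfloor u_k / v_{k-1}\rfloor \cdot v_{k-1} > u_k - v_{k-1} \ge u_k/2$ and is at most $u_k$ by definition of the floor; in the second case, $v_k = v_{k-1} > u_k/2$ directly, and $v_k \le u_k$ still holds by the induction hypothesis.

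The only subtlety I foresee is the necessity of the "stay put" branch: when $u_k$ does not grow sufficiently from one step to the next, there may be no multiple of $v_{k-1}$ lying in the window $(u_k/2, u_k]$, but in that regime $v_{k-1}$ itself already lies in this window, which is exactly what is needed to close the induction. Apart from this case split, the argument reduces to a routine induction, and no genuine obstacle is anticipated.
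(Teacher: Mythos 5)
Your proposal is correct and follows essentially the same route as the paper: the identical greedy recursion (keep $v_{k-1}$ when $v_{k-1}>u_k/2$, otherwise jump to $\lfloor u_k/v_{k-1}\rfloor v_{k-1}$), with the same case analysis to get $u_k/2 \le v_k \le u_k$ for large $k$. Your explicit handling of the initial segment where $u_k<2$ and the induction for the upper bound are just slightly more carefully spelled out versions of what the paper does.
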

\begin{proof}
Let us define the sequence $ \underline{v}$ by $ v_1 = 1$ and for $ k \ge 1 $ recursively by: 
\begin{equation} \label{def vk}
  v_{k+1}:= 
\begin{dcases} 
    \hspace{.3cm} v_k & \text{ if } \quad 2  v_k > u_{k+1}    \\
    \left\lfloor  \frac{u_{k+1}}{ v_k} \right\rfloor \cdot v_k             & \text{otherwise}. 
\end{dcases}
\end{equation}
Obviously $v_k$ divides $ v_{k +1} $ for every $ k \ge 1$.
 
It remains to show the inequalities in \cref{r to prod}. 
First observe that if  $ v_{k-1}  > u_k /2 $  for some $ k \ge 2$, then $ v_k = v_{k-1} > u_{k}/2 $. Otherwise, $v_{k-1} \le u_k/2$ and consequently $v_k$ is equal to:
\[  
 \left\lfloor  \frac{u_{k}}{ v_{k-1}} \right\rfloor \cdot v_{k-1} \ge \frac{u_k}{2} \; . \]
In both cases, we obtained: 
\begin{equation} \label{in1 prod}
\frac{u_k}{2} \le v_k  \; . 
\end{equation}
This correspond to the left hand side inequality from \cref{r to prod} but also the fact that $  \underline{v} $  diverges to $+ \infty$. 
  
 In particular, there exists $ k _0 \in \N $ minimal such that $v_{k_0} >  1 $ and thus $ v_{k_0} = \lfloor u_{k_0} \rfloor \le u_{k_0}$. 
Assume that $ v_k \le u _k $ for $ k \ge k _0$. If  $ v_k \le  u_{k+1} / 2 $ then $v_{k+1}$ is given by: 
\[ v_{k+1}  = \left\lfloor  \frac{u_{k+1}}{ v_{k}} \right\rfloor \cdot v_{k+1}   \; ,  \]
which is at most $ u_{k+1}$.  
Otherwise  $ v_{k+1} = v_k $ which is at most $ u_k $ by the made assumption, and at most $ u_{k+1}$ as $ \underline{u}$ is non-decreasing. 
Then, the following inequality is obtained by induction: 
\begin{equation} \label{in2 prod}
 v_k \le u_k \; , 
\end{equation}
for every $ k \ge k _0$. 
Now note that \cref{in1 prod,in2 prod} conclude the proof.   
\end{proof} 
Finally, we provide: 
\begin{proof}[Proof of \cref{prop seq main}]
Let $ \underline{u}$ be the sequence provided by \cref{osc lemma} for $ \underline{a}$ and $ \underline{b}$. Let then $ \underline{v}\in R $ be provided by \cref{r to prod} for $ \underline{u}$. 
Note that:  
 \[  \frac{1}{2} a_k \le  \frac{1}{2} u_k \le v_k \le u_k \le b_k \; . \]
 Thus it holds:
 \begin{equation} \label{p1}
  \limsup_{ k \to + \infty} \frac{a_k}{v_k} \le 2 \qand \liminf_{ k \to + \infty} \frac{b_k}{v_k} \ge 1 \; .
 \end{equation}
To prove the remaining inequalities, with the notations from \cref{osc lemma}, for every $ \ell \ge 1$ the following inequalities are verified: 
 \[ v_{T_\ell} \le u_{T_\ell} = a_{T_\ell} \qand v_{T_\ell+1} \ge  \frac{u_{T_\ell+1}}{2}  = \frac{b_{T_\ell+1}}{2} \;. \] 
This implies: 
 \begin{equation} \label{p2}
 \limsup_{ k \to + \infty} \frac{a_k}{v_k} \ge \limsup_{ \ell \to + \infty} \frac{a_{T_\ell}}{v_{T_\ell}} \ge  1 \qand \liminf_{ k \to + \infty} \frac{b_k}{v_k} \le  \limsup_{ \ell \to + \infty} \frac{b_{T_\ell+1}}{v_{T_\ell+1}} \le 2   \; . 
 \end{equation}
 Then \cref{p1,p2} together imply the desired result.  
\end{proof}
\bibliographystyle{alpha}
\bibliography{arb_scales.bib} 
\end{document}